%% file: Tmain.tex
\documentclass[abstract=true, DIV=12]{scrartcl}

\usepackage[section]{preamble}
\newcommand{\IIBGtransfer}{Corollary~6.1.2}

\title{The Becker--Gottlieb transfer is not functorial: An example using $\Cinf$-algebras}
\author{Florian Naef \and Robin Stoll}
\date{\today}

\begin{document}

\maketitle

\begin{abstract}
  We prove that the Becker--Gottlieb transfer is not functorial, even on rational cohomology.
  More precisely, for any even $n \ge 4$, we construct a fibration $f \colon X \to \Sphere {n+1}$ with fiber $\Sphere 2 \vee \Sphere n$, and an oriented $\Sphere 1$-bundle $g \colon Y \to X$ such that the transfer of $f \after g$ is non-trivial on rational cohomology.
  Since the transfer of an oriented $\Sphere 1$-bundle vanishes, this implies that the transfer of $f \after g$ and the composite of the transfers of $f$ and $g$ induce different maps on rational cohomology and are thus not homotopic.

  The proof that the transfer of $f \after g$ is non-trivial uses the rational models for the Becker--Gottlieb and THH transfers obtained in previous work of the authors.
  In the appendix, we provide a second, more involved, proof that relies on constructing explicit $\Cinf$-algebra models for odd spherical fibrations (including circle bundles), which might be of independent interest.
  
  Concurrently, another counterexample (of very different flavor) was found by Georg Lehner, Cary Malkiewich, and the first author.
\end{abstract}

\setcounter{tocdepth}{\sectiontocdepth}
\tableofcontents

\numberwithin{equation}{section}

\section{Introduction}

The \emph{Becker--Gottlieb transfer} of a fibration $f \colon X \to Y$ whose fibers are finitely dominated (i.e.\ they are homotopy retracts of finite CW-complexes) is a ``wrong-way'' map
\[ f^! \colon \SS[Y] \longto \SS[X] \]
of their suspension spectra.
It was originally defined by Becker--Gottlieb \cite{BG76} for fibrations whose base is finite-dimensional, and extended by Clapp \cite{Cla} to the general case (also see \cite[§6]{CCRY} for a modern treatment).
Surprisingly, for the past 50 years, it has remained an open question whether this transfer is \emph{functorial}, i.e.\ whether $(f \after g)^! = g^! \after f^!$ whenever $f$ and $g$ have finitely dominated fibers (in which case the fibers of $f \after g$ are also finitely dominated, see e.g.\ \cite[Proposition~C.6]{Bar17}).
There have been multiple attempts at proving this \cite{Hau13,KM}, but each of them turned out to contain an unfixable mistake.

In this paper, we prove that the statement is in fact false by providing a counterexample.
Moreover, the failure of functoriality is already detected on rational cohomology.
Simultaneously to our work, Lehner--Malkiewich--Naef \cite{LMN} also obtained a counterexample.
It is, however, of very different flavor to ours; in particular they detect the failure of functoriality in the torsion of stable homotopy groups.

\begin{introtheorem}[see \cref{thm:main}] \label{intro:thm:main}
  The Becker--Gottlieb transfer is not functorial, even on rational cohomology.
  That is, there exist two maps of spaces with finitely dominated fibers
  \[ X  \xlongto{g}  Y  \xlongto{f}  Z \]
  such that the maps of spectra
  \[ (f \after g)^! \colon \SS[Z]  \longto  \SS[X]  \displayand  g^! \after f^! \colon \SS[Z]  \longto  \SS[X] \]
  induce different maps on rational cohomology.
  In particular $(f \after g)^!$ and $g^! \after f^!$ are not homotopic.
\end{introtheorem}

Our counterexample is completely explicit: for any even $n \ge 4$, we consider two fibrations
\[ E_\xi  \xlongto{g}  E  \xlongto{f}  \Sphere {n+1} \]
where the fibers of $f$ are $\Sphere 2 \vee \Sphere n$, and $g$ is the oriented $\Sphere 1$-bundle classified by any non-trivial cohomology class in $\Coho 2 (E; \ZZ)$.
Writing $\B \aut_*(\Sphere 2 \vee \Sphere n)$ for the classifying space of the topological monoid of pointed self--homotopy equivalences of $\Sphere 2 \vee \Sphere n$, the fibration $f$ is classified by the map $\Sphere {n+1} \to \B \aut_*(\Sphere 2 \vee \Sphere n)$ corresponding to the element of
\begin{equation} \label{intro:eq:aut_wedge}
  \begin{aligned}
  \pi_n \bigl( \aut_*(\Sphere 2 \vee \Sphere n), \id \bigr)  &\iso  \pi_n \bigl( \Loops^2 (\Sphere 2 \vee \Sphere n) \times \Loops^n (\Sphere 2 \vee \Sphere n), (\inc, \inc) \bigr) \\
  &\iso  \pi_{n + 2} ( \Sphere 2 \vee \Sphere n ) \oplus \pi_{2n} ( \Sphere 2 \vee \Sphere n )
  \end{aligned}
\end{equation}
given by $(0, [\iota_n, [\iota_n, \iota_2]])$, where $\iota_i \colon \Sphere i \to \Sphere 2 \vee \Sphere n$ is the respective inclusion, and $[\blank, \blank]$ denotes the Whitehead product.

\subsection*{The proof}

It is a well-known fact that the Becker--Gottlieb transfer of any oriented $\Sphere 1$-bundle vanishes (more generally, it vanishes for any smooth fiber bundle whose vertical tangent bundle admits a nowhere vanishing section).
Hence, to prove \cref{intro:thm:main}, it is enough to show that $(f \after g)^!$ is non-trivial on rational cohomology.

To this end, given a fibration $f \colon E \to B$ with finitely dominated fibers, and an oriented $\Sphere 1$-bundle $g \colon E_\xi \to E$ classified by some $\xi \in \Coho 2(E; \ZZ)$, we provide a description of the transfer $(f \after g)^!$ on rational cohomology in terms of $\xi$ and a rational model of $f$.
More precisely (see \cref{prop:transfer_circle_composite}), it factors as the Gysin homomorphism $\Coho * (E_\xi; \QQ) \to \Coho {* - 1} (E; \QQ)$ followed by the map induced on cohomology by
\begin{equation} \label{intro:eq:transfer_composite}
  S  \longto  \k,  \qquad  x  \longmapsto  - \tr_\k \bigl( \mu^S_3(x, \blank, \xi) \bigr)
\end{equation}
where $\k$ is a cdga that models $B$, and $S$ is a dualizable $\Cinf$-algebra over $\k$ that models $E$ over $B$; here $\tr_\k$ denotes the $\k$-linear trace of an endomorphism of $S$.
(Recall that a $\Cinf$-algebra $S$ is a cochain complex equipped with a multiplication $\mu^S_2$ that is commutative, and homotopy associative up to specified higher coherences $\mu^S_m$; they first appeared in the work of Smirnov \cite{Smi}.)

To prove this claim, we use that, by work of Lind--Malkiewich \cite{LM}, the Becker--Gottlieb transfer can be expressed in terms of the transfer in topological Hochschild homology; by our previous work \cite{I}, this THH transfer is rationally modeled by the transfer in ordinary Hochschild homology of cdgas (e.g.\ as defined by Keller \cite{Kel21}).
The argument is completed using the explicit description of the Hochschild homology transfer obtained in the follow-up \cite{II}, and by expressing the result in terms of the $\Cinf$-algebra $S$.

In view of \eqref{intro:eq:transfer_composite}, to prove \cref{intro:thm:main} it is enough to construct a fibration $f \colon X \to B$ with sufficiently non-trivial ``fiberwise Massey product'' $\mu^S_3$.

% In view of \eqref{intro:eq:transfer_composite}, to prove \cref{intro:thm:main} it is enough to construct some $f$, $\k$, and $S$ as above such that $\tr_\k ( \mu^S_3(x, \blank, \xi) )$ is non-trivial for some $\xi \in \Coho 2 (E; \ZZ)$ and some $x$ in the image of the Gysin homomorphism on rational cohomology associated to $\xi$.
We do this using a result contained in forthcoming work of Berglund \cite{Ber} (restated as \cref{prop:Berglund}), who constructs an explicit $\Cinf$-algebra model for a fibration given a dg Lie model of its holonomy action.
In the case where the base of $f$ is a sphere and the fibers are a wedge of spheres, it roughly states that $\mu^S_m$ is determined by $(m-1)$-fold iterated Whitehead products under the isomorphism \eqref{intro:eq:aut_wedge}.
The example we provided around \eqref{intro:eq:aut_wedge} is one of the simplest ways of realizing a non-trivial $\mu^S_3$.

\subsection*{An alternative proof using $\Cinf$-algebra models for odd spherical fibrations}

In the appendix, we provide an alternative, more conceptual (but also more involved) proof of the fact that, for $g \colon E_\xi \to E$ an oriented $\Sphere 1$-bundle, the transfer $(f \after g)^!$ on rational cohomology is given in terms of the formula \eqref{intro:eq:transfer_composite}.
This relies directly on the rational description of the Becker--Gottlieb transfer obtained in our previous work \cite{II} (via the result of Lind--Malkiewich \cite{LM} and our model for the transfer in topological Hochschild homology).
More precisely, we proved there that the Becker--Gottlieb transfer of a fibration $f$ is, on rational cohomology, induced by the map
\begin{equation*}
  S  \longto  \k,  \qquad  x  \longmapsto  \tr_\k \bigl( \mu^S_2(x, \blank) \bigr)
\end{equation*}
where $S$ is a $\Cinf$-algebra over a cdga $\k$ that together model $f$.
Together with the following result, which provides a $\Cinf$-algebra model for $E_\xi$ in terms of $S$ and $\xi$, this implies the claim.
(Note that $\Sphere n$ is rationally equivalent to $\K(\QQ, n)$ when $n$ is odd.)

\begin{introtheorem}[see \cref{thm:Cinf_circle}]
  Let $d \ge 2$ be an even integer, $f \colon E \to B$ a fibration of nilpotent spaces of finite rational type, $\xi$ an element of $\Coho d (E; \QQ)$, and $g \colon E_\xi \to E$ the principal $\K(\QQ, d-1)$-fibration classified by $\xi$.
  Furthermore let $\k$ be a cofibrant cdga that models $B$, and let $S$ be a unital $\Cinf$-algebra over $\k$ that models $E$ over $B$.
  Then there is an explicit $\Cinf$-algebra $S_\xi$ over $\k$ that models $E_\xi$ over $B$; its structure map $\mu_n$ is defined in terms of the structure maps $\mu^S_n, \dots, \mu^S_{2n}$ of $S$ and any cocycle representative of the class $\xi$.
\end{introtheorem}

When $\xi = 0$, this theorem essentially recovers a special case of a result of Lupton--Smith \cite[Theorem~2.1]{LS07}.

\subsection*{Acknowledgments}

We would like to thank Georg Lehner and Cary Malkiewich for informing us about their simultaneous work on \cite{LMN}, and agreeing to release it at the same time as the present paper.

\subsection*{AI declaration}

While finishing \cite{II}, the authors suspected that the rational description for the Becker--Gottlieb transfer obtained there could be used to construct a counterexample to it being functorial.
However, our approach got stuck at some point, and in retrospect turned out to be needlessly complicated (in essence, we were trying to compute composites of Becker--Gottlieb transfers in certain universal examples using graph complexes).
Then, in September 2026, the first author prompted an LLM to produce a counterexample, upon which it suggested an example where the first map was an $\Sphere 1$-bundle.
In this paper, we independently construct a counterexample starting from this idea, without further AI input.
(Our example ended up being of the same kind as the one suggested by the LLM; this makes sense, considering we expect it to be one of the simplest possible ones.
However, we believe our proofs to be significantly more conceptual.)

\subsection*{Notation and conventions}

We will generally work in the $\infty$-category of spaces (though this does not matter much for the purposes of this paper), and call its objects \emph{animas}.

We use homological grading conventions for dg Lie algebras (i.e.\ differentials have degree~$-1$), and write homological degrees as subscripts.
For everything else, we use cohomological grading conventions (i.e.\ differentials have degree~$+1$), and we write cohomological degrees as superscripts.
We denote by $\shift (\blank)$ a homological shift by $+1$, and a cohomological shift by $-1$.

We assume knowledge of the basics of rational homotopy theory, roughly on the level of Félix--Halperin--Thomas \cite[Chs.~II and IV]{FHT}.
For cdga models, we follow the model categorical approach of Bousfield--Gugenheim \cite{BG}; also see \cite[§§2.2--2.4]{I} for a short summary.
In particular, a cofibration of cdgas is a map of non-negatively graded cdgas that is a cofibration in the Bousfield--Gugenheim model structure.

We will furthermore require some familiarity with $\Cinf$-algebras, for which we follow the conventions of \cite[§3]{II} (which are taken from \cite{Kel}).
In particular, for us a $\Cinf$-algebra $S$ over a cdga $\k$ is a (dg) $\k$-module $S$ equipped with structure maps $\mu^S_n \colon (\shift S)^{\tensor_\k n} \to \shift S$; see \cref{app:Cinf_circle} for a short summary of the precise definitions.

\numberwithin{equation}{theorem}

\section{The transfer of composites with circle bundles}

As explained in the introduction, the core of our argument consists of a rational description of the transfer $(f \after g)^!$ when $g$ is an $\Sphere 1$-bundle.
This is provided by the following proposition, which we deduce from our previous work \cite{I,II} on the THH transfer, and its relation to the Becker--Gottlieb transfer due to Lind--Malkiewich \cite{LM}.
In \cref{prop:transfer_circle_composite_alt}, we provide an alternative proof directly based on our rational model for the Becker--Gottlieb transfer from \cite{II}, combined with the explicit $\Cinf$-model for circle bundles that we construct in \cref{app:Cinf_circle}.

Before we state the proposition, recall that a unital $\Cinf$-algebra $S$ over a cdga $\k$ being a \emph{model} for a map $f \colon X \to B$ of nilpotent animas of finite rational type means that there is some chosen cofibration of cdgas $\k \to R$ that models $f$ and a chosen unital quasi-isomorphism $S \to R$ of $\Cinf$-algebras.
In particular there are isomorphisms $\Coho * (S) \iso \Coho * (R) \iso \Coho * (X; \QQ)$ of graded algebras.

\begin{proposition} \label{prop:transfer_circle_composite}
  Let $f \colon X \to B$ be a map of simply connected animas of finite rational type with simply connected compact fiber $F$.
  Furthermore, let $\xi \in \Coho 2 (X ; \ZZ)$ such that its image in $\Coho 2 (F; \ZZ)$ is non-zero, and let $g \colon X_\xi \to X$ be the $\Sphere 1$-bundle classified by $\xi$.
  Then the Becker--Gottlieb transfer $(f \after g)^! \colon \Coho * (X_\xi; \QQ) \to \Coho * (B; \QQ)$ factors as
  \[ \Coho * (X_\xi; \QQ)  \longto  \Coho {* - 1}(X; \QQ)  \longto  \Coho {*} (B; \QQ) \]
  where the first map is the Gysin homomorphism and the second map is induced by the map of cochain complexes
  \[ \shift S  \longto  \k,  \qquad  \shift x  \longmapsto  (-1)^{\deg {\shift x}} \tr^{\shift S}_\k \bigl( \mu^S_3(\shift x, \blank, \shift \alpha) \bigr) \]
  where $\k$ is a cofibrant cdga model for $B$ and $S$ is a $\Cinf$-algebra over $\k$ that models $X$ over $B$, such that $S$ is cofibrant and dualizable as a $\k$-module, and $\alpha \in S^2$ is a cocyle that represents the image of $\xi$ in $\Coho 2 (X ; \QQ)$.
\end{proposition}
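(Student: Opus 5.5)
Following the introduction, the proof goes through the THH transfer. By Lind--Malkiewich \cite{LM}, the Becker--Gottlieb transfer of $f \after g$ is the THH transfer $\mathrm{THH}(\SS[\Omega B]) \to \mathrm{THH}(\SS[\Omega X_\xi])$ composed with the inclusion of constant loops and with the map to the base. Rationally, \cite{I} identifies the THH transfer of $f\after g$ with the Hochschild homology transfer of the cdga map $\k \to R_\xi$ modelling $X_\xi \to B$. Both sides are then dualized to cohomology. I would write $R_\xi = R \tensor_{\k} \k$ enlarged by a degree-one generator $e$ with $de = \alpha$; more precisely, take $R_\xi = R[e]$ with $e^2=0$ and $de=\alpha$, replacing $R$ by $S$ through homotopy transfer. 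This is a $\k$-module of the form $R \oplus R e$, so it is dualizable over $\k$ whenever $S$ is. The Hochschild transfer of \cite{II} is given explicitly as a sum over Hochschild chains involving the trace over $\k$ of the composite of multiplications (and, after transfer to $S$, of the higher operations $\mu^S_m$). So the first step is to write the cohomological transfer $\Coho*(X_\xi)\to\Coho*(B)$ as the trace formula from \cite{II} for the algebra $R[e]$. That formula combines the $\mu_2$-trace term with the correction terms coming from the HH transfer evaluated on constant loops.

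Next I would compute the trace term for the extension $R[e]$. In the splitting $R \oplus Re$, multiplication by an element $x + ye$ is lower triangular. The differential $d(e)=\alpha$, however, produces an off-diagonal term: $d$ on $R\oplus Re$ is the matrix $\begin{pmatrix} d & \alpha\cdot \\ 0 & d\end{pmatrix}$. Because the two diagonal blocks enter with opposite signs (the Koszul sign from $e$), the naive supertrace of $\mu_2(x+ye,\blank)$ cancels. This recovers the vanishing of the transfer for the circle bundle $g$ alone. The surviving contribution comes from the part of the formula in \cite{II} that accounts for non-strictness, i.e.\ the homotopy needed to make the trace a chain map. After transferring the structure to $S$ via the homotopy transfer theorem, the surviving term pairs the component $y$ with $\alpha$ through the homotopy, giving $\tr_\k(\mu^S_3(\shift y,\blank,\shift\alpha))$. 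The map $x+ye\mapsto y$ is on cohomology exactly the Gysin map (integration along the $\Sphere1$-fibers). This is why the result factors through $\Coho{*-1}(X;\QQ)$.

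The ingredients I would use in order are: (i) reduce to a cofibrant dualizable $S$ and a model $\k\to R$ with a quasi-isomorphism $S\to R$; (ii) build the model $S_\xi$ of $X_\xi$ as $S\oplus S e$ with the twisted structure; (iii) apply the trace formula of \cite{II} for $\k\to S_\xi$; (iv) expand the trace block-wise, cancelling the diagonal terms and keeping the term linear in $\alpha$. The hypothesis that $\xi$ is nonzero on $F$ should ensure that $X_\xi\to B$ still has simply connected, compact (finitely dominated) fiber $F_\xi$, so that the models and dualizability hypotheses apply.

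The main obstacle is step (iv) together with the correct $\Cinf$-structure on $S_\xi$. Since $S$ is only $\Cinf$, the operations on $S\oplus Se$ involve corrections such as $\mu^{S_\xi}_2(ye, y'e)$ built from $\mu^S_3(y,\alpha,y')$. Pinning down which of these contribute to the trace, with the sign $(-1)^{\deg\shift x}$, is the delicate bookkeeping. My first attempt would construct $S_\xi$ only to first order in $e$ (all that matters, since $e^2=0$ rationally up to homotopy) and check the $\Cinf$ relations in the lowest arities by hand.
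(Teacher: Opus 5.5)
Your outline follows the same route as the paper's appendix: build a $\Cinf$-model of $X_\xi$ over $B$ on $S\otimes\QQ\langle 1,e\rangle$, then apply the $\mu_2$-trace formula of \cite{II}. However, the step that carries essentially all of the content is missing, and the shortcut you propose for it would not work.

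The trace formula of \cite{II} applies only to an honest unital $\Cinf$-algebra over $\k$ that is dualizable and comes with a $\Cinf$ quasi-isomorphism to a cofibrant cdga model of $X_\xi\to B$. It cannot be applied to $R[e]$ itself, because $R$ is in general not dualizable over $\k$.

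Building $S_\xi$ ``to first order in $e$'' and checking low arities does not produce such an object. The operations of $S_\xi$ on inputs with several copies of $e$ must involve $\mu^S_{n+m}$ with $m$ insertions of $\shift\alpha$; already $\mu_2^{S_\xi}(ye,y'e)$ needs the component $\mu^S_4(\shift y,\shift\alpha,\shift y',\shift\alpha)$. The $A_\infty$-relations mix all these orders.

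You also need to extend $\phi$ to a $\Cinf$ quasi-isomorphism $S_\xi\to R[e]$ compatible with $S\to R$. Homotopy transfer does not supply this from the hypotheses: they give a $\Cinf$ quasi-isomorphism $\phi\colon S\to R$ but no $\k$-linear contraction. Even with a contraction, the transferred operations would be expressed through the homotopy rather than through $\mu^S_3$. In the appendix, exactly this gap is what requires the explicit all-arity formula, the combinatorial verification of the shuffle and $A_\infty$-relations, and the extension $\phi_\alpha$ with the proof that it is a quasi-isomorphism (Theorem~\ref{thm:Cinf_circle}).

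Your account of where the answer comes from is also not right. The formula of \cite{II} is just the supertrace of $\mu_2$ of the dualizable model; there is no separate ``non-strictness'' or chain-homotopy correction term. In the correct structure, the supertrace of $\mu_2^{S_\xi}(x,\blank)$ does vanish, since it has two equal diagonal blocks entering with opposite signs. The supertrace of $\mu_2^{S_\xi}(ye,\blank)$ does not vanish. Its diagonal block on $S$ is $y'\mapsto\mu^S_3(\shift y,\shift\alpha,\shift y')\pm\mu^S_3(\shift y,\shift y',\shift\alpha)$, and on $Se$ it is $y'e\mapsto\mu^S_3(\shift y,\shift\alpha,\shift y')e$. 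The supertrace therefore leaves $\tr_\k\mu^S_3(\shift y,\blank,\shift\alpha)$.

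So the surviving term is diagonal, and ``cancelling the diagonal terms'' is the wrong bookkeeping. Off-diagonal pieces, such as the one coming from $de=\alpha$, never contribute to a trace. The answer can only be read off once these specific components of $\mu_2^{S_\xi}$ are known.

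If you want to avoid constructing a model of $X_\xi$ at all, the paper's main proof does so in three steps:
\begin{itemize}
  \item Functoriality of the Hochschild transfer \cite{AK} factors it as $\HH_\k(R_\alpha)\to\HH_\k(R)\to\HH_\k(\k)$.
  \item Since $R_\alpha=R[e]$ is dualizable over $R$ (not over $\k$), the first map is computed from its derived coevaluation. It sends $x\mapsto 0$ and $xe\mapsto\shift[-1](\shift x\otimes\shift\alpha)$, which gives the Gysin factorization.
  \item The length-one chain $\shift[-1](\shift x\otimes\shift\alpha)$ is compared with its counterpart in $\HH_\k(S)$. There the transfer formula of \cite{II} produces the two traces of $\mu^S_3$, which combine by the shuffle relation.
\end{itemize}
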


\begin{proof}
  By assumption, there exists a cofibration of cdgas $\k \to R$ that models $f$, and a unital quasi-isomorphism of $\Cinf$-algebras $\phi \colon S \to R$.
  By an abuse of notation, we will also denote the image $\phi_1(\alpha)$ by $\alpha$.
  Then $g \colon X_\xi \to X$ is modeled by the cofibration $R \to R_\alpha$, where $R_\alpha \defeq R \tensor \SA(a)$ with $\deg a = 1$ and differential determined by $d(a) \defeq \alpha \in R$ (see e.g.\ \cite[§15(b), Example~4]{FHT}).
  
  By \cite[Theorem~1.2]{LM} (see also \cite[Proposition~6.1.1]{II}), the Becker--Gottlieb transfer $(f \after g)^!$ is homotopic to the composite
  \[ \SS[B]  \xlongto{\alpha}  \THH(B)  \xlongto{(f \after g)^*}  \THH(E_\xi)  \xlongto{e}  \SS[E_\xi] \]
  where $\alpha$ is the THH assembly map, the middle map is the THH transfer of $f \after g$, and $e$ is given by the equivalence $\THH(E_\xi) \eq \SS[\Map(\Sphere 1, E_\xi)]$ and evaluation at the basepoint of $\Sphere 1$.
  By \cite[Theorem~4.6.3 and Lemmas~4.5.4 and 4.5.5]{I} (and using that the fiber of $f \after g$ is simply connected since the restriction of $\xi$ in $\Coho 2(F; \ZZ)$ is non-trivial), this composite is modeled by the composite
  \[ R_\alpha  \longto  \HH_\k(R_\alpha)  \xlongto{\transfer{R_\alpha}}  \HH_\k(\k)  \xlongto{\eq}  \k \]
  where $\HH_\k$ denotes the Hochschild complex over $\k$ (see e.g.\ \cite[Definition~2.2.13]{I}), the first map is given by the inclusion, the middle map is the Hochschild homology transfer associated to $\k \to R_\alpha$ (see e.g.\ \cite[Construction~4.6.1]{I}), and the final map is the projection onto Hochschild degree $0$.
  The middle map factors as
  \[ \HH_\k(R_\alpha)  \xlongto{\transfer{R_\alpha}}  \HH_\k(R)  \xlongto{\transfer{R}}  \HH_\k(\k) \]
  since the Hochschild homology transfer is functorial (see Armenta--Keller \cite[Theorem~2.1]{AK}).

  Since $R_\alpha$ is dualizable over $R$, the transfer $\transfer{R_\alpha} \colon \HH_\k(R_\alpha) \to \HH_\k(R)$ can be explicitly described using \cite[Theorem~4.2.5]{II}.
  Using the ``derived coevaluation''
  \[ c  \defeq  1 \otimes \dual 1 + a \otimes \dual a + 1 \otimes \shift \alpha \otimes \dual a  \in  R_\alpha \otimes_R \B_\k(R) \otimes_R \dual{R_\alpha} \]
  this result implies that the composite $R_\alpha \to \HH_\k(R_\alpha) \to \HH_\k(R)$ is (homotopic to the map) given by
  \begin{align*}
      &x  \longmapsto  0  \displayand  x \cdot a  \longmapsto  \shift[-1] ( \shift x \tensor \shift \alpha )
      \intertext{for $x \in R$. In particular, it factors through the map $R_\alpha \to \shift R$ given by}
      &x  \longmapsto  0  \displayand  x \cdot a  \longmapsto  (-1)^{\deg x} \shift x
  \end{align*}
  which is a model for the Gysin homomorphism (this follows from the identification of the Serre spectral sequence for $X_\xi \to X$ in terms of $R \to R_\alpha$ as provided in \cite[End of proof of Proposition~3.2]{Pri}).
  We furthermore note that the following square commutes
  \[
  \begin{tikzcd}
    \shift S \rar \dar[swap]{\phi_1} & \HH_\k(S) \dar{\phi_*} \\
    \shift R \rar & \HH_\k(R)
  \end{tikzcd}
  \]
  where $\HH_\k(S)$ denotes the Hochschild complex of the $\Cinf$-algebra $S$ over $\k$ (see e.g.\ \cite[§3.7]{II}), and both horizontal maps are given by $\shift x \mapsto (-1)^{\deg x} \shift[-1] (\shift x \tensor \shift \alpha)$.
  To see this note that (e.g.\ by \cite[Observation~3.7.6]{II})
  \[ \phi_*(\shift x \tensor \shift \alpha)  =  \phi_1(\shift x) \tensor \phi_1(\shift \alpha) + \phi_2(\shift x, \shift \alpha) + (-1)^{\deg {\shift x}} \phi_2(\shift \alpha, \shift x) = \phi_1(\shift x) \tensor \phi_1(\shift \alpha) \]
  using the shuffle relation for the $\Cinf$-morphism $\phi$.
  Note that $\phi_*$ is a quasi-isomorphism by \cite[Lemma~3.7.8]{II}.

  We now consider the diagram
  \[
  \begin{tikzcd}
     & \shift S \rar \dar{\eq}[swap]{\phi_1} & \HH_\k(S) \rar{\transfer{S}} \dar{\phi_*}[swap]{\eq} & \HH_\k(\k) \rar{\eq} \dar[equal] & \k \dar[equal] \\
    R_\alpha \rar & \shift R \rar & \HH_\k(R) \rar{\transfer{R}} & \HH_\k(\k) \rar{\eq} & \k
  \end{tikzcd}
  \]
  where the middle square commutes up to homotopy by \cite[Proposition~4.1.4]{II}.
  By \cite[Theorem~4.2.7]{II}, the composite $\HH_\k(S) \to \HH_\k(\k) \to \k$ is given by
  \[ \shift[-1] ( \shift x \tensor \shift \alpha)  \longmapsto  \tr_\k^{\shift S} \bigl( \mu^S_3 ( \shift x, \shift \alpha, \blank) \bigr) + (-1)^{\deg {\shift x}} \tr_\k^{\shift S} \bigl( \mu^S_3 ( \shift \alpha, \shift x, \blank) \bigr)  =  - \tr_\k^{\shift S} \bigl( \mu^S_3 ( \shift x, \blank, \shift \alpha) \bigr) \]
  using the shuffle relation for $\mu^S_3$ (regarding the sign, note that we have $\mu^S_3(\shift x, \blank, \shift \alpha)(\shift y) = (-1)^{\deg {\shift y}} \mu^S_3(\shift x, \shift y, \shift \alpha)$).
  This completes the proof.
\end{proof}

\section{The counterexample}

In this section, we construct our counterexample to functoriality of the Becker--Gottlieb transfer.
As explained in the introduction, it rests on the following well-known fact.
(The assumption that $X$ is compact is not necessary, but we did not find a convenient reference for the more general case.)

\begin{proposition}[Becker--Gottlieb] \label{prop:BG_circle}
  Let $g \colon Y \to X$ be a smooth fiber bundle with fiber a compact manifold, and assume that $X$ is a compact anima.
  If the vertical tangent bundle of $g$ over $Y$ admits a nowhere vanishing vector field, then the Becker--Gottlieb transfer $g^! \colon \SS[X] \to \SS[Y]$ is trivial.
\end{proposition}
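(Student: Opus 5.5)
The plan is to reduce to the classical statement that the Becker--Gottlieb transfer, composed with the projection, is multiplication by the Euler characteristic of the fiber. The reduction uses the parametrized (fiberwise) Pontryagin--Thom description of the transfer together with the Hopf--Poincaré index theorem.

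We use the original construction of Becker--Gottlieb for smooth fiber bundles over a compact base. After replacing $X$ up to homotopy by a compact manifold with boundary, we embed $Y$ fiberwise into $X \times \RR^N$. We then write $g^!$ as the composite of two maps. The first is the fiberwise Pontryagin--Thom collapse $\Sigma^N_+ X \to \operatorname{Th}(\nu)$, where $\nu$ is the fiberwise normal bundle. The second is the map $\operatorname{Th}(\nu) \to \operatorname{Th}(\nu \oplus T_v Y) = \Sigma^N_+ Y$, induced by the zero section of the vertical tangent bundle $T_v Y$. Here we use $\nu \oplus T_v Y \iso \varepsilon^N$, and we desuspend $N$ times. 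This agrees with the homotopy-theoretic definition by the comparison in \cite{BG76}, which is available precisely because $X$ is compact.

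The key step is to deform the zero section of $T_v Y$ through the chosen nowhere vanishing vertical vector field $v$. The straight-line homotopy $t \mapsto t v$ connects the zero section to $v$. Hence the inclusion $D(\nu) \to \operatorname{Th}(\nu \oplus T_v Y)$ given by the zero section is homotopic to the map given by $v$. Since $v$ never vanishes, the latter factors through the complement of the zero section. Points outside the zero section of $\nu \oplus T_v Y$ are sent to the basepoint after projecting onto the disk bundle of the trivial bundle, after rescaling. So the second map is nullhomotopic. Care is needed to make this homotopy compatible with the boundary $S(\nu) \mapsto \ast$. To arrange this, we take $v$ of unit length and use the homotopy $(w, t) \mapsto (w, t\,\lambda(|w|) v)$. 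Here $\lambda$ is chosen so that the total vector leaves the unit disk when $|w| = 1$ or when $t = 1$. Then $g^!$ is nullhomotopic after $N$-fold suspension, and hence as a map of spectra.

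The main obstacle is the identification of the abstract Becker--Gottlieb (or Clapp) transfer with the geometric Pontryagin--Thom model, and the bookkeeping of the homotopy relative to the sphere bundle. For the identification I would simply cite \cite{BG76}, which states that $g^!$ depends only on the vertical tangent bundle via this Euler-class-type construction. Indeed, \cite{BG76} already records that the transfer vanishes when the fiberwise Euler class vanishes via a section. So the proof essentially amounts to extracting this from \cite{BG76}, plus the explicit homotopy described above.
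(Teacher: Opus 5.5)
Your geometric core is correct. In the Pontryagin--Thom model, moving the zero section of $T_vY$ along the unit vector field $v$ gives a null-homotopy of $\mathrm{Th}(\nu)\to\mathrm{Th}(\nu\oplus T_vY)$ that is compatible with the collapse. Already $(w,t)\mapsto(w,tv)$ works if the Thom space is taken as the quotient by vectors of norm $\ge 1$, so the auxiliary $\lambda$ is not needed. This is exactly why the global vector field transfer of \cite[\S5]{BG91} vanishes. The paper handles the case of a finite complex $X$ by citing \cite[Theorem~7.1]{BG91} instead of writing out this homotopy, so on that case the two arguments agree. Your opening sentence about reducing to $g_*\after g^! = \chi(F)$ is a red herring: $\chi(F)=0$ only kills that composite, not $g^!$ itself. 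The argument you actually give does not use it.

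The genuine gap is your first step, ``replacing $X$ up to homotopy by a compact manifold with boundary.'' A compact anima is only finitely dominated. Because of Wall's finiteness obstruction it need not be homotopy equivalent to any finite CW complex, so in general no such manifold model exists. Compactness of $X$ in this sense is therefore not what gives you the smooth Pontryagin--Thom description.

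You need an extra reduction, which is how the paper finishes. Choose a finite complex $K$ with maps $i \colon X \to K$ and $r \colon K \to X$ such that $r \after i \simeq \id$. Let $g_K \colon Y_K \to K$ be the pullback of $g$ along $r$. It is again a smooth bundle whose vertical tangent bundle has a nowhere vanishing section, so $g_K^! \simeq 0$ by the finite case. The pullback of $g_K$ along $i$ is (equivalent to) $g$, so base-change naturality of the transfer \cite[Lemma~6.19]{CCRY} gives $\bar\imath_* \after g^! \simeq g_K^! \after i_* \simeq 0$, where $\bar\imath \colon Y \to Y_K$ is the induced map of total spaces. Let $\bar r \colon Y_K \to Y$ be the map induced by $r$. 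Composing with $\bar r_*$ and using that $\bar r \after \bar\imath$ is an equivalence yields $g^! \simeq 0$.
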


\begin{proof}
  First suppose that $X$ is a finite complex.
  Then \cite[Theorem~7.1]{BG91} states that the transfer is given by the global vector field transfer defined in \cite[§5]{BG91}, which vanishes by construction if the vertical tangent bundle has a nowhere vanishing vector field.
  If $X$ is only compact (i.e.\ finitely dominated), then it is a retract of a finite complex; since the Becker--Gottlieb transfer is functorial with respect to base change (see e.g.\ \cite[Lemma~6.19]{CCRY}), this completes the proof.
\end{proof}

In particular this applies to any oriented $\Sphere 1$-bundle (which are classified by $\B \SO(2) \eq \B \Sphere 1 \eq \K(\ZZ, 2)$).
Hence, to contradict functoriality of the Becker--Gottlieb transfer, it is enough to construct a map of animas $f \colon X \to B$ with compact fibers, and an oriented $\Sphere 1$-bundle $g \colon Y \to X$, such that the Becker--Gottlieb transfer $(f \after g)^!$ is non-trivial.
By \cref{prop:transfer_circle_composite} it is thus enough to find (omitting technical conditions):
\begin{itemize}
  \item
  a map of simply connected animas $f \colon X \to B$ with compact simply connected fiber $F$,
  \item
  a cohomology class $\xi \in \Coho 2 (X; \ZZ)$ whose restriction to $F$ is non-trivial,
  \item
  a cdga $\k$ that models $B$,
  \item
  a dualizable $\Cinf$-algebra $S$ over $\k$ that models $X$ over $B$,
  \item
  a cocycle $\alpha \in S^2$ that represents the image of $\xi$ in $\Coho 2 (X; \QQ)$,
  \item
  a cocycle $x \in S^n$ for some $n$,
\end{itemize}
such that $\tr^{\shift S}_\k ( \mu^S_3(\shift x, \blank, \shift \alpha) ) \in \Coho {n+1} (\k)$ is non-trivial.

We will do this using the following result appearing in forthcoming work of Berglund \cite{Ber}.
Before we state it, recall that $\Der(L)$ denotes the dg Lie algebra of derivations of a dg Lie algebra $L$; when $L$ is the dg Lie model of some pointed nilpotent space $X$, the positive homological truncation $\Der(L)_{\ge 1}$ is a dg Lie model for the classifying space $\B \aut_*(X)_\id$ of the identity component of the topological monoid of pointed self--homotopy equivalences of $X$ (see \cite[Corollaire~VII.4.(4)]{Tan}).
We denote by $\CoCE(\lie g; M)$ the Chevalley--Eilenberg cochain complex of a dg Lie algebra $\lie g$ with coefficients in a $\lie g$-module $M$ (see e.g.\ \cite[§23(c)]{FHT}).

\begin{proposition}[Berglund] \label{prop:Berglund}
  Let $B$ be a nilpotent anima, $F$ a compact simply connected anima of finite type, $c \colon B \to \B \aut_*(F)_\id$ a map, and $f \colon E \to B$ the map (with fiber $F$) classified by $c$.
  Write $H_* \defeq \Ho * (X; \QQ)$ and $\wt{H}_* \defeq \rHo * (X; \QQ)$, and similarly for $H^*$ and $\wt{H}^*$.
  Let $L = \LL(\shift[-1] \wt{H}_*)$ be the minimal dg Lie model of $F$, and let $\phi \colon \lie g \to \Der(L)_{\ge 1}$ be a map of dg Lie algebras that models $c$.
  Then $\CoCE(\lie g)$ is a cofibrant cdga model for $B$, and the cofibrant $\CoCE(\lie g)$-module $\CoCE(\lie g; H^*)$ can be equipped with the structure of a unital $\Cinf$-algebra over $\CoCE(\lie g)$ such that it models $E$ over $B$.
  Here $\lie g$ acts on $H^* \iso \QQ \langle 1 \rangle \oplus \wt{H}^*$ by the dual of the linear part of its action on $L$.
  The $\CoCE(\lie g)$-linear $\Cinf$-structure maps $\mu_n$ of $\CoCE(\lie g; H^*)$ are such that the restriction
  \begin{equation*}
    (\shift H^*)^{\tensor n}  \longto  \CoCE \bigl( \lie g; (\shift H^*)^{\tensor n} \bigr)  \iso  \bigl( \shift \CoCE(\lie g; H^*) \bigr)^{\tensor_{\CoCE(\lie g)} n}  \xlongto{\mu_n}  \shift \CoCE(\lie g; H^*)
  \end{equation*}
  is the sum $m_n + \psi_n$, where $m_n \colon (\shift H^*)^{\tensor n} \to \shift H^*$ is the $\Cinf$-algebra structure on $H^*$, and $\psi_n \colon (\shift H^*)^{\tensor n} \to \Hom_\QQ(\lie g, \shift H^*)$ is the adjoint of the map
  \[ x_1 \tensor \dots \tensor x_n \tensor \theta  \longmapsto  \sum_i \langle x_1 \otimes \dots \otimes x_n, \phi(\theta)(\dual{e_i}) \rangle \cdot e_i \]
  where the $e_i$ are a homogeneous basis of $\shift \wt{H}^*$, the $\dual{e_i}$ are the corresponding dual basis of $\dual{(\shift \wt{H}^*)} \iso \shift[-1] \wt{H}_*$, and $\langle \blank, \blank \rangle$ denotes the canonical pairing $\T(\shift H^*) \tensor \T(\shift[-1] H_*) \to \QQ$.
  (In the formula we consider $\dual{e_i} \in \shift[-1] \wt{H}_*$ as an element of $L$; hence $\phi(\theta)(\dual{e_i})$ lies in $L$, which we interpret as a subspace of $\T(\shift[-1] H_*)$.)
\end{proposition}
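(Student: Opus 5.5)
We would prove this by transferring a fiberwise $L_\infty$/$\Cinf$ duality along the twisting by $\lie g$, rather than by direct computation. The first step is standard: since $\phi$ lands in $\Der(L)_{\ge 1}$, the semidirect product $\lie g \ltimes_\phi L$ is a dg Lie algebra, and the projection $\lie g \ltimes_\phi L \to \lie g$ is a dg Lie model for $f \colon E \to B$. This is the relative version of Tanré's classification result cited before the statement: the universal fibration over $\B\aut_*(F)_\id$ is modeled by $\Der(L)_{\ge 1} \ltimes L$, and pulling back along $c$ is modeled by pulling back along $\phi$. Consequently, the relative Chevalley--Eilenberg cochains
\[ \CoCE(\lie g \ltimes_\phi L) \iso \CoCE\bigl(\lie g; \CoCE(L)\bigr) \]
form a cofibrant $\CoCE(\lie g)$-cdga. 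The inclusion $\CoCE(\lie g) \to \CoCE(\lie g; \CoCE(L))$ is a cofibration modeling $f$, and $\CoCE(\lie g)$ models $B$ because $\lie g$ is positively graded and $B$ is nilpotent.

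The second step is to replace $\CoCE(L)$ fiberwise by $H^*$. Since $L = \LL(\shift[-1]\wt H_*)$ is free, $\CoCE(L)$ is quasi-isomorphic to $\QQ \oplus \shift[-1]\dual{(\shift[-1] \wt H_*)} \iso H^*$. Concretely, there is a contraction of $\CoCE(L)$ onto $H^*$, and $H^*$ carries the $\Cinf$-structure $m_n$ dual to the $\LL$-brackets of $L$: $m_n$ is dual to the quadratic differential of $L$, that is, to the component of $d(\dual{e_i})$ lying in tensor length $n$. This is the Koszul duality between the minimal Quillen model and the $\Cinf$-coalgebra $H_*$.

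We then regard $\CoCE(\lie g; \CoCE(L))$ as a perturbation of $\CoCE(\lie g) \tensor \CoCE(L)$ with the untwisted differential. The perturbation is the $\lie g$-action term coming from $\phi$, and it is filtered by $\CoCE(\lie g)$-degree. Since $\lie g$ sits in positive homological degrees, this perturbation raises the filtration, so the homological perturbation lemma applies. It transfers the cdga structure to a $\Cinf$-structure on $\CoCE(\lie g) \tensor H^* = \CoCE(\lie g; H^*)$, together with a $\Cinf$-quasi-isomorphism to the cdga. Because the data are $\CoCE(\lie g)$-linear, all transferred operations are $\CoCE(\lie g)$-linear.

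The third step identifies the transferred operations. Here it is cleaner to work dually: the Quillen model of the total space is $\lie g \ltimes_\phi L$, and an element $\dual{e_i}$ has differential
\[ d_L(\dual{e_i}) + \sum_\theta \theta^* \cdot \phi(\theta)(\dual{e_i}). \]
The $\Cinf$-operations on $H^*$ are obtained by dualizing, in each tensor length, the component of this twisted differential. The $d_L$ part yields $m_n$, and the $\phi$ part yields exactly $\psi_n$, namely the pairing $\langle x_1 \otimes \dots \otimes x_n, \phi(\theta)(\dual{e_i}) \rangle e_i$. The linear ($n=1$) part of $\phi$ gives the twisted differential on $\CoCE(\lie g; H^*)$, that is, the dual linear action, as the statement asserts. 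Unitality follows because $\phi(\theta)$ kills nothing in $\QQ\langle 1\rangle$; more precisely, the unit is untouched since $\phi$ only acts on $\wt H$.

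The main obstacle is making this identification rigorous. Perturbation produces tree formulas, and we must check that they collapse to the single term $m_n + \psi_n$ on elements of $(\shift H^*)^{\tensor n}$. We would do this by exploiting the freeness and minimality of $L$, which means the contraction of $\CoCE(L)$ onto $H^*$ can be taken to be the bar/cobar one, and by noting that higher trees involve at least two $\phi$-insertions. Such trees land in $\CoCE^{\ge 2}(\lie g)$ and are not part of the restricted formula, or they vanish by the side conditions. Signs would be tracked using the shifted conventions of \cite[§3]{II}.
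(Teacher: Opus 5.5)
The paper does not prove this statement itself; it defers to Berglund's forthcoming work and the sketch in \cite[Proposition~5.3.4]{II}. Your argument therefore has to stand on its own, and it has a genuine gap exactly where the content of the proposition lies: the identification of the transferred operations in your third step.

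First, you misread what must be shown. The restriction of $\mu_n$ to $(\shift H^*)^{\tensor n}$ maps into all of $\shift \CoCE(\lie g; H^*)$. The claim is that it \emph{equals} $m_n + \psi_n$, which only has components of Chevalley--Eilenberg weight $0$ and $1$. So tree contributions with two or more $\phi$-insertions are not ``outside the restricted formula''; their total must vanish. Moreover, the single-insertion contributions must add up to exactly $\psi_n$. These can sit on any internal edge (via $h_\infty = h + h\delta_\phi h + \dots$), not only at the root, so it is not true that the unwanted trees all carry at least two insertions. ``They vanish by the side conditions'' is not an argument. Homotopy transfer only determines the result up to $\k$-linear $\Cinf$-isomorphism, so an on-the-nose formula requires a specific contraction and a proof. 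The same applies to your fiberwise claim that transfer from $\CoCE(L)$ yields exactly $m_n$.

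Second, the displayed differential of $\dual{e_i}$ is not the differential of $\lie g \ltimes_\phi L$. There $d\dual{e_i} = d_L\dual{e_i}$, and $\dual{\theta}$ is not an element of that Lie algebra. What you wrote is the differential on $\k \hat\otimes L$, with $\k \defeq \CoCE(\lie g)$, twisted by $\tau = \sum_\theta \dual{\theta}\tensor\phi(\theta)$. That twisted differential is Koszul dual to the structure you want, so dualizing it \emph{defines} $m_n + \psi_n$; it does not compute what the perturbation lemma produced.

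Your route can be completed, but only with a chosen contraction and a counting argument.
\begin{enumerate}
\item Let $V \defeq \shift[-1]\wt H_*$, and write the differential of $\CoCE(L) = \SA(\dual{(\shift L)})$ as $d_1 + d_2$, with $d_1$ dual to $d_L$ and $d_2$ dual to the bracket.
\item Take a contraction $(i,p,h)$ of $(\SA(\dual{(\shift L)}), d_2)$ onto $\QQ \oplus \dual{(\shift V)}$ that is homogeneous for polynomial degree and word length. Then $h$ lowers polynomial degree by one and vanishes in polynomial degree $\le 1$, since $\dual{(\shift L)}$ sits in degrees $\ge 2$. Also, $p$ kills polynomial degree $\ge 2$.
\item Extend the contraction $\k$-linearly and perturb by $d_1 + \delta_\phi$ at once. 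This is locally nilpotent: $h$ preserves word length, $d_1$ lowers it, and $\delta_\phi$ does not raise it while raising CE-weight, which is bounded in each degree.
\item Since $d_1$ and $\delta_\phi$ preserve polynomial degree, a tree with $n \ge 2$ leaves in $\dual{(\shift V)}$ contributes only if it has exactly one insertion, placed directly before $p$. Hence $\mu_n = p(d_1+\delta_\phi)h(T_n)$, where $T_n$ is the usual sum of binary trees with plain $h$ on internal edges. Likewise $\mu_1 = d_\k + p(d_1+\delta_\phi)i$.
\item It remains to identify $h(T_n)(x_1,\dots,x_n)$, up to sign, with the functional $\langle x_1\tensor\dots\tensor x_n, \blank\rangle$ on $\shift\LL^n(V)$. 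Both satisfy $d_2\omega_n = \sum \pm \omega_p\omega_q$. Moreover, $d_2$ is injective on $\dual{(\shift \LL^n(V))}$ for $n \ge 2$, because brackets span $\LL^n(V)$.
\end{enumerate}

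Alternatively, reverse the logic. Define $\mu$ by $m_n + \psi_n$. The $\Cinf$-relations then amount to the Maurer--Cartan equation for $\tau$ (that is, $\phi$ being a dg Lie map), plus the fact that shuffles kill Lie elements. Then show that the $\k$-linear extension of the canonical $\Cinf$-quasi-isomorphism $H^* \to \CoCE(L)$, given by $x_1\tensor\dots\tensor x_n \mapsto \langle x_1\tensor\dots\tensor x_n, \blank\rangle$, is a $\Cinf$-quasi-isomorphism into $\CoCE(\lie g;\CoCE(L))$. The extra terms match because $\phi(\theta)$ acts on $\T(V)$ as a derivation, and it is a quasi-isomorphism by filtering by CE-weight.
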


\begin{proof}
  This will appear in \cite{Ber}; also see \cite[Proposition~5.3.4]{II} for a proof sketch.
\end{proof}

Writing $\k \defeq \CoCE(\lie g)$ and $S \defeq \CoCE(\lie g; H^*)$, this result tells us that for $\tr^{\shift S}_\k ( \mu^S_3(\shift x, \blank, \shift \alpha) )$ to be non-trivial, it is essentially enough to produce a derivation of $L = \LL(\shift[-1] \wt{H}_*)$ that sends some $e \in \wt{H}_*$ to $[\dual x, [e, \dual \alpha]]$.
This is very easy to do, for example by choosing $X$ to be a wedge of spheres, so that the differential of $L$ is trivial.
We first record that, in this case, such derivations are also easy to realize topologically.

\begin{lemma} \label{lemma:aut_wedge_spheres}
  For $1 \le i \le k$, let $n_i \ge 2$ be an integer.
  Writing $X \defeq \bigvee_i \Sphere{n_i}$ and $V \defeq \shift[-1] \rHo * (X; \QQ)$, the following diagram of abelian groups commutes for every $p \ge 1$
  \[
  \begin{tikzcd}
    \pi_p \bigl( \aut_* (X), \id \bigr) \rar{\iso} \dar & \pi_p \bigl( \prod_j \Loops^{n_j} X, (\inc_j)^k_{j = 1} \bigr) \rar{\iso} & \bigoplus_j \pi_{p + n_j} (X) \dar{\rho} \\
    \Der(\LL V)_p \rar{\iso} & \Hom(V, \LL V)_p \rar{\iso}[swap]{\psi} & \bigoplus_j (\LL V)_{p + n_j - 1}
  \end{tikzcd}
  \]
  where the vertical maps are induced by rationalization, using that $\LL V$ (equipped with the trivial differential) models $X$ and that $\Der (\LL V)_{\ge 1}$ models $\B \aut_*(X)_\id$.
\end{lemma}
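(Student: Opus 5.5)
The plan is to check commutativity on a generating set, after first making each map explicit. The top row sends a pointed self-map family $\Sphere p \times X \to X$ (based at $\id$) to its restrictions to the wedge summands $\Sphere p \times \Sphere{n_j} \to X$. These are adjoint to maps $\Sphere p \to \Loops^{n_j} X$ based at $\inc_j$, and translating to the basepoint component gives classes in $\pi_{p+n_j}(X)$; for $p \ge 1$ this last step is the usual identification (subtract $\inc_j$ in the $H$-group $\Loops^{n_j}X$). The bottom row sends a derivation to its restriction to the generators $V$, which is free since $\LL V$ is free. The map $\psi$ then splits $\Hom(V, \LL V)_p$ along the basis $\shift[-1]\iota_j$ of $V$, with $\shift[-1]\iota_j$ of degree $n_j - 1$. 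The map $\rho$ is rationalization followed by the Quillen/Milnor--Moore identification $\pi_{*}(X)\tensor\QQ \iso H_{*-1}(\LL V) = (\LL V)_{*-1}$, valid because the differential is trivial.

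The key step is to identify the left vertical map with Tanré's identification of $\pi_p(\B\aut_*(X)_\id) \tensor \QQ$ with $H_p(\Der(\LL V))$ (shifted appropriately). I would use the standard description of that isomorphism via the Quillen model of the adjoint map. A class in $\pi_p(\aut_*(X))$ is represented by a map $\Sphere p \times X \to X$ that restricts to $\id$ on $* \times X$, or better by its half-smash version $\Sphere p \ltimes X \to X$. Its Quillen model is a dg Lie map from the model of $\Sphere p \ltimes X$ to $\LL V$. That model is $\LL(V \oplus \shift[p] V \oplus \dots)$ with appropriate brackets, and since $X$ is a suspension the model simplifies to $\LL(V \oplus \shift[p] V)$ with $d(\shift[p] v)=0$. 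Restricting to $\shift[p] V$ gives a linear map $V \to \LL V$ of degree $p$, which extends uniquely to a derivation. This is exactly Tanré's (or Lupton--Smith's) description, and I would cite it.

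Next I would evaluate on a summand. The restriction of the adjoint to $\Sphere p \ltimes \Sphere{n_j} \simeq \Sphere{p+n_j}\vee \Sphere{n_j}$ is $(\beta_j, \inc_j)$, where $\beta_j\in\pi_{p+n_j}(X)$ is the image under the top row. Its Quillen model sends the generator $\shift[p]\shift[-1]\iota_j$ to the Hurewicz/Quillen image of $\beta_j$ in $(\LL V)_{p+n_j-1}$, which is $\rho(\beta_j)$. So $\psi$ of the derivation in the $j$-th component equals $\rho$ of the $j$-th component, and the diagram commutes. Throughout I would track that both rows are group isomorphisms, so that it suffices to check component-wise.

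I expect the main obstacle to be the precise identification of the left vertical map. The statement only says it is "induced by rationalization", so I must pin down that Tanré's equivalence $\Der(\LL V)_{\ge1}\simeq$ model of $\B\aut_*(X)_\id$ induces on $\pi_{p}$ the "linear restriction to the $\shift[p]$-generators" map above, including signs and the $\pi_p(\aut)=\pi_{p+1}(\B\aut)$ shift. I would first try to deduce this from naturality: evaluation at each inclusion $\Sphere{n_j}\to X$ gives a map $\aut_*(X)\to \Map_*(\Sphere{n_j},X)$, whose dg Lie model is the evaluation of derivations on $\shift[-1]\iota_j$ (Lupton--Smith mapping space models). That reduces the problem to the known compatibility of $\pi_*\Map_*(\Sphere{n},X)\tensor\QQ\iso H(\Hom(\shift[-1]\QQ\iota, \LL V))$ with Quillen's isomorphism, possibly up to a universal sign, which I would then fix by convention.
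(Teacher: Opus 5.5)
Your proposal is correct and takes essentially the same route as the paper. The paper likewise passes to maps from the half-smash $(\Sphere p \times X)/(\Sphere p \times x_0) \simeq \bigvee_i (\Sphere{n_i} \vee \Sphere{p+n_i})$ that restrict to the identity, and checks that $\gamma$ corresponds to $\bigvee_i (\inc_i \vee \gamma_i)$. It then models this map by $\LL(V \oplus \shift[p] V) \to \LL V$ with $\shift[p] v \mapsto \inv{\psi}(\rho(\gamma))(v)$, and cites the Lupton--Smith adjoint-map description of the left vertical map, via Lindell--Saleh's proof of their Proposition~3.12, for the final identification, with your fallback through $\aut_*(X) \to \Map_*(\Sphere{n_j}, X)$ not needed.
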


\begin{proof}
  Let $x_0 \in X$ and $s_0 \in \Sphere p$ be the basepoints.
  Note that (for $p \ge 1$) a pointed map $\phi \colon (\Sphere p, s_0) \to (\aut_*(X), \id)$ is equivalently a map $f \colon (\Sphere p \times X) / (\Sphere p \times x_0) \to X$ such that $f(s_0, x) = x$ for all $x \in X$.
  We observe
  \[ (\Sphere p \times X) / (\Sphere p \times x_0)  \iso  \bigvee_i \bigl( (\Sphere p \times \Sphere {n_i}) / (\Sphere p \times x_0) \bigr)  \eq  \bigvee_i (\Sphere {n_i} \vee \Sphere{p + n_i}) \]
  using, for the last homotopy equivalence, that the attaching map $\Sphere {p + n_i - 1} \to \Sphere {n_i}$ of the top cell of $(\Sphere p \times \Sphere {n_i}) / (\Sphere p \times x_0)$ is null-homotopic via the composite $\Disk {p + n_i} \to \Sphere p \times \Sphere {n_i} \to \Sphere {n_i}$.
  One then checks that an element $\gamma = (\gamma_1, \dots, \gamma_k)$ of $\bigoplus_j \pi_{p + n_j}(X)$ corresponds to the map $f_\gamma \colon \bigvee_i (\Sphere {n_i} \vee \Sphere{p + n_i}) \to X$ given by $\bigvee_i (\inc_i \vee \gamma_i)$.
  Note that $f_\gamma$ is modeled by the map $\LL (V \oplus \shift[p] V) \to \LL (V)$ given by $v \mapsto v$ and $\shift[p] v \mapsto \inv{\psi}(\rho(\gamma))(v)$ for all $v \in V$.
  Then the claim follows from an argument of Lindell--Saleh \cite[Proof of Proposition~3.12]{LS24} (which builds on work of Lupton--Smith \cite[§3]{LS07}). 
\end{proof}

\begin{proposition} \label{prop:example}
  Let $n \ge 4$ be an even integer, and $c \colon \Sphere {n+1} \to \B \aut_*(\Sphere 2 \vee \Sphere n)_\id$ the map corresponding to the element of
  \begin{align*}
    \pi_n \bigl( \aut_*(\Sphere 2 \vee \Sphere n), \id \bigr)  &\iso  \pi_n \bigl( \Loops^2 (\Sphere 2 \vee \Sphere n) \times \Loops^n (\Sphere 2 \vee \Sphere n), (\inc, \inc) \bigr) \\
    &\iso  \pi_{n + 2} ( \Sphere 2 \vee \Sphere n ) \oplus \pi_{2n} ( \Sphere 2 \vee \Sphere n ) 
  \end{align*}
  given by $(0, [\iota_n, [\iota_n, \iota_2]])$, where $\iota_i \colon \Sphere i \to \Sphere 2 \vee \Sphere n$ is the respective inclusion.
  Let $f \colon E \to \Sphere {n+1}$ be the fibration classified by $c$, and $g \colon E_\xi \to E$ the $\Sphere 1$-bundle classified by some non-trivial element $\xi \in \Coho 2 (E; \ZZ) \iso \ZZ$.
  Then the Becker--Gottlieb transfer
  \[ \Coho {n+1} (E_\xi; \QQ)  \xlongto{(f \after g)^!}  \Coho {n+1} (\Sphere {n+1}; \QQ) \]
  is non-trivial.
\end{proposition}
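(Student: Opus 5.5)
The plan is to apply \cref{prop:transfer_circle_composite} with an explicit model coming from \cref{prop:Berglund} and \cref{lemma:aut_wedge_spheres}, and then compute a single trace.

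\textbf{Setting up the model.} Let $F = \Sphere 2 \vee \Sphere n$ and write $L = \LL(a, b)$ with zero differential, where $|a| = 1$ and $|b| = n-1$ are dual to the classes $u \in \wt H^2$ and $v \in \wt H^n$. By \cref{lemma:aut_wedge_spheres}, the class $(0, [\iota_n, [\iota_n, \iota_2]])$ rationalizes to a nonzero multiple of the derivation $D \in \Der(L)_n$ given by $D(a) = 0$ and $D(b) = [b, [b, a]]$. Its degree is $2(n-1)+1 = n + (n-1)$, as it should be. Since $n$ is even, $D$ is an even derivation, so $[D, D] = 0$. Also $dD = 0$ because $L$ has zero differential.

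Let $\lie g$ be the abelian dg Lie algebra on one generator $\theta$ of degree $n$, with zero differential, and let $\phi(\theta) = D$ (after rescaling if needed). This is a map of dg Lie algebras $\lie g \to \Der(L)_{\ge 1}$ that models $c$, since $\lie g$ is the minimal Lie model of $\Sphere{n+1}$. Then $\k = \CoCE(\lie g) = \SA(t)$ with $|t| = n+1$, which models $\Sphere{n+1}$.

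\textbf{The $\Cinf$-structure.} By \cref{prop:Berglund}, $S = \CoCE(\lie g; H^*) = \SA(t) \otimes \QQ\{1, u, v\}$. The linear part of $D$ vanishes, because $D$ is purely cubic. Hence $\lie g$ acts trivially on $H^*$, and $S$ has zero differential. In particular $S$ is free and dualizable over $\k$.

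Because $F$ is a wedge of spheres, its minimal Lie model has zero differential, so the $\Cinf$-structure $m_n$ on $H^*$ is trivial apart from the unit. The maps $\psi_n$ only see the bracket-length-$n$ part of $D$, so $\psi_n = 0$ for $n \ne 3$. Consequently $\mu_2$ is just the cup product on $H^*$ extended $\k$-linearly, and in particular $u \cdot v = 0$. Moreover $\mu_3 = \psi_3$.

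Expanding $D(b) = [b, [b, a]]$ in $\T(\shift[-1] H_*)$ with $|b|$ odd, we get $[b,a] = ba + ab$ and $[b, ba + ab] = bba - abb$. Therefore $\mu_3(\shift v, \shift v, \shift u) = \pm t \cdot \shift v$. The only other nonzero value of $\mu_3$ is on $\shift u, \shift v, \shift v$.

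\textbf{Computing the trace.} Take $x = v$ and $\alpha = u$. The $\k$-linear endomorphism $\shift y \mapsto \mu_3(\shift v, \shift y, \shift u)$ of $\shift S$ kills $\shift 1$ and $\shift u$, and sends $\shift v \mapsto \pm t\, \shift v$. Hence its trace is $\pm t$, which is nonzero in $\Coho{n+1}(\k) \iso \Coho{n+1}(\Sphere{n+1}; \QQ)$.

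\textbf{Checking the hypotheses of \cref{prop:transfer_circle_composite}.} Since $E$ has cells only in dimensions $0, 2, n, n+1, n+2, 2n+1$, the fiber inclusion induces an isomorphism $\Coho 2(E; \ZZ) \iso \Coho 2(F; \ZZ) \iso \ZZ$. So $\xi$ restricts nontrivially to $F$. The spaces $E$ and $F$ are simply connected and $F$ is compact. The class $\xi$ is represented rationally by a nonzero multiple of $u$, which only rescales the trace.

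It remains to see that $[v]$ lies in the image of the Gysin map $\Coho{n+1}(E_\xi; \QQ) \to \Coho n(E; \QQ)$. By the Gysin sequence this image is the kernel of cup product with $\xi$, and $v \cdot u = 0$. So pick $z$ with Gysin image $[v]$. By \cref{prop:transfer_circle_composite}, $(f \after g)^!(z) = \pm t \ne 0$.

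\textbf{Main obstacle.} I expect the hardest part to be the bookkeeping. This means matching the identification in \cref{lemma:aut_wedge_spheres} with the exact normalization of $\psi_3$ in \cref{prop:Berglund}, together with the Koszul signs in the pairing. Only nonvanishing matters, so the concern is just that no accidental cancellation occurs, for instance between the $bba$ and $abb$ terms. Here the two terms pair with different words in $\shift H^*$, so they cannot cancel.
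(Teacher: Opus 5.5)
Your proposal is correct and is essentially the paper's proof: you use the same models $\LL(\shift[-1] z) \to \Der(L)_{\ge 1}$ via \cref{lemma:aut_wedge_spheres}, Berglund's $\Cinf$-structure on $\CoCE(\lie g; H^*)$ with $\mu_3 = \psi_3$ read off from $[b,[b,a]] = bba - abb$, $\alpha = 1 \otimes u$, the Gysin-kernel argument for $v$, and the trace $\pm t$ from \cref{prop:transfer_circle_composite}. One harmless slip: the relative cells of $E$ over $F$ are $e^{n+1} \times e^k$ for $k \in \{0, 2, n\}$, so their dimensions are $n+1, n+3, 2n+1$ rather than $n+1, n+2, 2n+1$; this does not affect your $\Coho 2$ argument.
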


\begin{proof}
  Let $z$ be a generator of $\Ho {n+1} (\Sphere {n+1}; \QQ)$, and $y_2$ and $y_n$ the images of generators of $\Ho 2 (\Sphere 2; \QQ)$ and $\Ho n (\Sphere n; \QQ)$ in $\Ho * (\Sphere 2 \vee \Sphere n; \QQ)$.
  The minimal dg Lie models of $\Sphere{n+1}$ and $\Sphere 2 \vee \Sphere n$ are $\lie g \defeq \LL(\shift[-1] z)$ and $L \defeq \LL(\shift[-1] y_2, \shift[-1] y_n)$, respectively equipped with the trivial differential.
  Denoting by $\theta$ the derivation of $L$ of degree $n$ determined by
  \[ \shift[-1] y_n \longmapsto [\shift[-1] y_n, [\shift[-1] y_n, \shift[-1] y_2]]  \displayand  \shift[-1] y_2 \longmapsto 0 \]
  we have that the map
  \[ \phi \colon \LL(\shift[-1] z) \longto \Der(\LL(\shift[-1] y_2, \shift[-1] y_n))_{\ge 1}, \qquad \shift[-1] z \longmapsto \theta \]
  is a dg Lie model for $c$, by \cref{lemma:aut_wedge_spheres}.
  Then $\CoCE(\lie g) \iso \SA(\dual z)$ models $\Sphere {n+1}$ (here we use that $n$ is even), and, by \cref{prop:Berglund}, there is a unital $\Cinf$-algebra structure on $S \defeq \CoCE(\lie g) \tensor \Coho * (\Sphere 2 \vee \Sphere n; \QQ)$ that models $E$ such that
  \begin{equation} \label{eq:example_mu_3}
  \begin{aligned}
    \mu_3 \bigl( \shift (1 \tensor \dual{y_n}), \shift (1 \tensor 1), \shift (1 \tensor \dual{y_2}) \bigr)  &=  0 \\
    \mu_3 \bigl( \shift (1 \tensor \dual{y_n}), \shift (1 \tensor \dual{y_2}), \shift (1 \tensor \dual{y_2}) \bigr)  &=  0 \\
    \mu_3 \bigl( \shift (1 \tensor \dual{y_n}), \shift (1 \tensor \dual{y_n}), \shift (1 \tensor \dual{y_2}) \bigr)  &=  \shift ( \dual z \tensor \dual{y_n} )
  \end{aligned}
  \end{equation}
  (note that $\Sphere 2 \vee \Sphere n$ is formal, so that the $\Cinf$-structure maps of $\Coho * (\Sphere 2 \vee \Sphere n; \QQ)$ are trivial in arity $\neq 2$).

  Note that, by the Serre spectral sequence, the restriction of $\xi \in \Coho 2 (E; \ZZ)$ is non-trivial in the cohomology of the fiber $\Sphere 2 \vee \Sphere n$.
  Moreover, for degree reasons (using that $n \neq 2$), the cocycle $\alpha \defeq 1 \tensor \dual{y_2} \in S^2$ represents (a non-zero multiple of) the image of $\xi$ in $\Coho 2 (E; \QQ)$.
  By \cref{prop:transfer_circle_composite}, the Becker--Gottlieb transfer $(f \after g)^! \colon \Coho * (E_\xi; \QQ) \to \Coho * (B; \QQ)$ factors as
  \[ \Coho * (E_\xi; \QQ)  \longto  \Coho {* - 1}(E; \QQ)  \longto  \Coho {*} (B; \QQ) \]
  where the first map is the Gysin homomorphism and the second map is induced by (a non-zero multiple of) the map
  \[ \shift S  \longto  \k,  \qquad  \shift x  \longmapsto  - \tr^{\shift S}_\k \bigl( \mu^S_3(\shift x, \blank, \shift \alpha) \bigr) \]
  of cochain complexes.
  Note that the image of the Gysin homomorphism is the kernel of multiplication with $\xi$ on the cohomology of $X$, and that $1 \tensor \dual{y_n}$ represents a cohomology class in that kernel.
  By \eqref{eq:example_mu_3}, we have
  \[ \tr^{\shift S}_\k \bigl( \mu^S_3 \bigl( \shift (1 \tensor \dual{y_n}), \blank, \shift \alpha \bigr) \bigr)  =  \pm \dual z \]
  and hence the transfer $(f \after g)^!$ is non-trivial on rational cohomology.
\end{proof}

Altogether, we have in particular proven the following.

\begin{theorem} \label{thm:main}
  The Becker--Gottlieb transfer is not functorial.
  In fact, it is not even functorial on rational cohomology.
  That is, there exist two maps of animas with compact fibers
  \[ X  \xlongto{g}  Y  \xlongto{f}  Z \]
  such that the maps of spectra
  \[ g^! \after f^! \colon \SS[Z]  \longto  \SS[X]  \displayand  (f \after g)^! \colon \SS[Z]  \longto  \SS[X]  \]
  induce different maps on rational cohomology.
\end{theorem}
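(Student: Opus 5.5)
We assemble \cref{thm:main} from \cref{prop:example} and \cref{prop:BG_circle}. Fix an even $n \ge 4$. Take $Z = \Sphere{n+1}$, take $f \colon Y \defeq E \to \Sphere{n+1}$ to be the fibration classified by $c$ as in \cref{prop:example}, and take $g \colon X \defeq E_\xi \to E$ to be the oriented $\Sphere 1$-bundle classified by a non-trivial $\xi \in \Coho 2 (E; \ZZ)$. We then check three things in order: both maps have compact fibers, $g^!$ vanishes, and $(f \after g)^!$ is non-zero on rational cohomology.

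\textbf{Step 1: compactness and the choice of $\xi$.} The fibers of $f$ are $\Sphere 2 \vee \Sphere n$, which is a finite complex. The fibers of $g$ are $\Sphere 1$. We also need $\Coho 2 (E; \ZZ) \iso \ZZ$, so that a non-trivial $\xi$ exists; this was already used in \cref{prop:example}. It follows from the Serre spectral sequence, since the base is $(n)$-connected with $n \ge 4$, so $\Coho 2 (E; \ZZ) \iso \Coho 2 (\Sphere 2 \vee \Sphere n; \ZZ)$.

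\textbf{Step 2: $g^! \simeq 0$.} \Cref{prop:BG_circle} requires a smooth fiber bundle over a compact base. The base $E$ is compact as an anima, being the total space of a fibration with finite fiber over a finite complex. To get smoothness we argue as follows. An oriented $\Sphere 1$-bundle is the pullback along $\xi \colon E \to \K(\ZZ,2) \eq \B \SO(2)$ of the universal bundle. Since the transfer commutes with base change \cite[Lemma~6.19]{CCRY}, and $E$ is a retract of a finite complex $K$, we may replace $E$ by $K$. Over $K$ the pullback of $\xi$ factors through a finite skeleton of $\CP^\infty$, i.e.\ through some $\CP^N$. There the bundle is a genuinely smooth principal $\SO(2)$-bundle, and the fundamental vector field of the $\SO(2)$-action is a nowhere vanishing section of the vertical tangent bundle. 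Hence $g^! \simeq 0$, and so $g^! \after f^! \simeq 0$, which is zero on rational cohomology. This is where I expect the only real care to be needed: making sure the hypotheses of \cref{prop:BG_circle} (compact base, smooth bundle) are met. The base-change and retract argument above handles it.

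\textbf{Step 3: nontriviality.} By \cref{prop:example}, the map $(f \after g)^! \colon \Coho{n+1}(E_\xi;\QQ) \to \Coho{n+1}(\Sphere{n+1};\QQ)$ is non-zero. Therefore $(f \after g)^!$ and $g^! \after f^!$ induce different maps on rational cohomology, and in particular they are not homotopic. Renaming $E_\xi, E, \Sphere{n+1}$ as $X, Y, Z$ gives the statement of \cref{thm:main}.
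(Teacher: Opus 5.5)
Your proposal is correct and follows the paper's proof exactly: \cref{prop:example} shows that $(f \after g)^!$ is non-zero on $\Coho{n+1}(\blank;\QQ)$, while \cref{prop:BG_circle} gives $g^! \simeq 0$ and hence $g^! \after f^! \simeq 0$. Passing through a finite skeleton of $\K(\ZZ,2)$ in Step~2 is unnecessary, since the paper just notes that any oriented $\Sphere 1$-bundle is a principal $\SO(2)$-bundle whose rotation field is a nowhere vanishing vertical vector field; if you keep that step, conclude by applying the vector-field argument to the pulled-back $\SO(2)$-bundle over $K$ itself rather than by base change from the skeleton, because a commuting base-change square alone does not carry vanishing of the transfer over the skeleton over to $K$.
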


\begin{proof}
  By \cref{prop:example}, there exist maps of animas
  \[ E_\xi  \xlongto{g}  E  \xlongto{f}  \Sphere{n+1} \]
  such that the Becker--Gottlieb transfer $(f \after g)^!$ is non-trivial on rational cohomology.
  The map $E \to \Sphere{n+1}$ has fibers $\Sphere 2 \vee \Sphere n$, and $g \colon E_\xi \to E$ is the oriented $\Sphere 1$-bundle classified by some cohomology class $\xi \in \Coho 2 (E; \ZZ)$.
  But the Becker--Gottlieb transfer $g^!$ is trivial by \cref{prop:BG_circle}.
\end{proof}

\begin{appendices}

\crefalias{section}{appsec}

\include{TCinf}

\end{appendices}

\printbibliography

\end{document}

%% file: TCinf.tex
\section{Modeling odd spherical fibrations using \texorpdfstring{$\Cinf$}{C\_infty}-algebras} \label{app:Cinf_circle}

In this appendix, we construct explicit $\Cinf$-models for odd spherical fibrations (including circle bundles), and use this to give an alternative proof of \cref{prop:transfer_circle_composite}.

\subsection*{Recollections}

In this subsection, we recall the definition of $\Ainf$-algebras (originally due to Stasheff \cite{Sta}) and $\Cinf$-algebras (which first appeared in work of Smirnov \cite{Smi}; also see Kadeishvilli \cite{Kad}).
We will follow the conventions of \cite[§3]{II} (which are taken from \cite{Kel}).

\begin{notation}
  Throughout this subsection, we fix a cdga $\k$.
\end{notation}

\begin{definition}
  An $\Ainf$-algebra structure on a $\k$-module $R$ consists of a collection $(\mu^R_n)_{n \ge 1}$ of $\k$-linear maps of degree $1$
  \[ \mu^R_n \colon (\shift R)^{\otimes_\k n} \longto \shift R \]
  such that $\mu^R_1$ is the differential of $\shift R$ (i.e.\ $\mu_1^R(\shift x) = - \shift d_R(x)$) and the equation
  \begin{equation} \label{eq:Ainf}
    \sum_{r+s+t = n} \mu^R_{r+1+t} \circ ({\id^{\otimes r}} \otimes \mu^R_s \otimes \id^{\otimes t}) = 0
  \end{equation}
  holds for all $n \ge 1$.
  
  A morphism $f \colon R \to S$ of $\Ainf$-algebras over $\k$ is a collection $(f_n)_{n \ge 1}$ of $\k$-linear maps of degree $0$
  \[ f_n \colon (\shift R)^{\otimes_\k n} \longto \shift S\]
  such that
  \begin{equation} \label{eq:Ainf_map}
    \sum_{n_1 + \dots + n_i = n} \mu^{S}_i \circ (f_{n_1} \otimes \dots \otimes f_{n_i}) = \sum_{r+s+t = n} f_{r+1+t}\circ ({\id^{\otimes r}} \otimes \mu^R_s \otimes \id^{\otimes t})
  \end{equation}
  holds for all $n \ge 1$.
  The structure maps of the composite of two morphisms $f \colon R \to S$ and $g \colon S \to S'$ are defined to be
  \[ (g \circ f)_n  =  \sum_{n_1 + \dots + n_i = n} g_i \circ (f_{n_1} \otimes \dots \otimes f_{n_i}) \]
  and the identity $\id \colon R \to R$ has structure maps $\id_1 = \id$ and $\id_n = 0$ for $n \ge 2$.
\end{definition}

\begin{definition}
  We say that a morphism $f \colon R \to S$ of $\Ainf$-algebras over $\k$ is a \emph{quasi-isomorphism} when its linear part $f_1 \colon \shift R \to \shift S$ is a quasi-isomorphism.
  We say that $f$ is \emph{strict} when $f_n = 0$ for all $n \ge 2$.
\end{definition}

\begin{definition}
  An $\Ainf$-algebra $R$ over $\k$ is \emph{unital} if it is equipped with a distinguished cocycle $1 \in R^0$ such that
  \[ \mu_2(\shift 1, \shift a) = \shift a = (-1)^{\deg {a}} \mu_2(\shift a, \shift 1) \]
  for all $a \in R$ and such that $\mu_n(x_1, \dots, x_n) = 0$ whenever $n \ge 3$ and $x_i = \shift 1$ for some $i$.
  
  Let $R$ and $S$ be unital $\Ainf$-algebras over $\k$; a map $f \colon R \to S$ is \emph{unital} if
  \[ f_1(\shift 1) = \shift 1  \qquad \text{and} \qquad  f_n(x_1, \dots, x_n) = 0 \]
  whenever $n \ge 2$ and $x_i = \shift 1$ for some $i$.
\end{definition}

\begin{observation} \label{obs:dga_as_Ainf}
  Let $\k \to R$ be a morphism of dgas.
  Then $R$ becomes a unital $\Ainf$-algebra over $\k$ by defining
  \[ \mu_1(\shift a) \defeq - \shift d(a), \qquad \mu_2(\shift a, \shift b) \defeq (-1)^{\deg a} \shift (a \cdot b), \]
  and $\mu_n \defeq 0$ for $n \ge 3$.
  A map $f \colon R \to S$ of dgas over $\k$ yields a strict, unital map of unital $\Ainf$-algebras over $\k$ by defining $f_1(\shift a) \defeq \shift f(a)$ and $f_n \defeq 0$ for $n > 1$.
\end{observation}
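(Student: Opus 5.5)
The plan is to verify the defining identities directly: \eqref{eq:Ainf} for the proposed $\mu_n$, the unit axioms, and \eqref{eq:Ainf_map} for the proposed $f_n$. The key simplification is that only $\mu_1$ and $\mu_2$ are non-zero. A term $\mu_{r+1+t} \circ (\id^{\otimes r} \otimes \mu_s \otimes \id^{\otimes t})$ in the arity-$n$ relation can only be non-zero if $s \le 2$ and $r+1+t \le 2$. Since $n = (r+1+t) + s - 1$, this forces $n \le 3$. So \eqref{eq:Ainf} is vacuous for $n \ge 4$, and only the cases $n = 1, 2, 3$ need checking. Throughout, I use the Koszul sign rule with $\deg{\shift a} = \deg a - 1$, so that $(\id \otimes \mu_1)(\shift a \otimes \shift b) = (-1)^{\deg a - 1}\, \shift a \otimes \mu_1(\shift b)$. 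I also use the standing convention that $R$ is a $\k$-algebra in the sense of \cite[§3]{II}, so that $\mu_2$ is $\k$-bilinear and descends to $\tensor_\k$.

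\emph{Case $n = 1$.} The relation reads $\mu_1 \circ \mu_1 = 0$, which is $\shift d^2 = 0$.

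\emph{Case $n = 2$.} The relation is
\[ \mu_1 \mu_2 + \mu_2(\mu_1 \otimes \id) + \mu_2(\id \otimes \mu_1) = 0 . \]
Evaluating on $\shift a \otimes \shift b$ gives three terms:
\begin{itemize}
  \item $-(-1)^{\deg a}\, \shift d(ab)$,
  \item $-(-1)^{\deg a + 1}\, \shift (da \cdot b)$,
  \item $-(-1)^{\deg a - 1}(-1)^{\deg a}\, \shift (a \cdot db)$.
\end{itemize}
Their sum vanishes precisely by the Leibniz rule $d(ab) = da \cdot b + (-1)^{\deg a} a \cdot db$.

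\emph{Case $n = 3$.} Only the terms with $s = 2$ and $r + 1 + t = 2$ survive, so the relation is
\[ \mu_2(\mu_2 \otimes \id) + \mu_2(\id \otimes \mu_2) = 0 . \]
On $\shift a \otimes \shift b \otimes \shift c$, the first term gives $(-1)^{\deg a}(-1)^{\deg a + \deg b}\, \shift(abc)$. The second term gives $(-1)^{\deg a - 1}(-1)^{\deg b}(-1)^{\deg a}\, \shift(abc)$, where the factor $(-1)^{\deg a - 1}$ is the Koszul sign for moving $\mu_2$ past $\shift a$. These two are negatives of each other, using associativity of the product.

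\emph{Unitality.} We have $\mu_2(\shift 1, \shift a) = (-1)^0 \shift a$ and $\mu_2(\shift a, \shift 1) = (-1)^{\deg a} \shift a$, which is exactly the required identity. The condition on $\mu_n$ for $n \ge 3$ holds trivially since these maps vanish.

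\emph{Morphisms.} For a dga map $f$, set $f_1(\shift a) = \shift f(a)$ and $f_n = 0$ for $n \ge 2$. Both sides of \eqref{eq:Ainf_map} vanish for $n \ge 3$, because every term involves either some $f_{n_j}$ with $n_j \ge 2$ or some $\mu_s$ with $s \ge 3$.
\begin{itemize}
  \item For $n = 1$, the relation says $f$ commutes with $d$.
  \item For $n = 2$, it says $\mu_2(f_1 \otimes f_1) = f_1 \mu_2$. This holds because $f$ is multiplicative and degree-preserving, so the signs $(-1)^{\deg a}$ agree on both sides.
\end{itemize}
Strictness holds by construction. Unitality follows from $f(1) = 1$ together with $f_n = 0$ for $n \ge 2$. $\k$-linearity holds because $f$ is a map under $\k$.

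The only real obstacle is sign bookkeeping in the $n = 2$ and $n = 3$ checks. There I would carefully fix the Koszul convention for applying $\id^{\otimes r} \otimes \mu_s \otimes \id^{\otimes t}$, using the degree-$1$ maps on shifted elements, exactly as in \cite{Kel}.
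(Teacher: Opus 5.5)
Your proposal is correct: the arity count reducing the $\Ainf$-relations to $n \le 3$, the sign computations for $n = 2$ (which reduce to the Leibniz rule) and $n = 3$ (which reduce to associativity), and the unitality and morphism checks all go through exactly as you wrote them. The paper states this as an observation without proof, so your direct verification is the intended argument; the one unstated detail, that $1$ is a cocycle, follows from $d(1) = d(1 \cdot 1) = 2\, d(1)$.
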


We now recall the definition of $\Cinf$-algebras, which are a (strictly) commutative analog of $\Ainf$-algebras.

\begin{definition} \label{def:shuffle}
  Let $p$ and $q$ be non-negative integers.
  A \emph{$(p, q)$-shuffle} is a permutation $\sigma \in \Sigma_{p+q}$ such that the inequalities
  \[ \sigma(1) < \sigma(2) < \dots < \sigma(p)  \qquad \text{and} \qquad  \sigma(p + 1) < \sigma(p + 2) < \dots < \sigma(p + q) \]
  hold.
  We denote by $\Sh p q \subseteq \Sigma_{p + q}$ the set of $(p, q)$-shuffles.
\end{definition}

\begin{definition}
  A \emph{$\Cinf$-algebra over $\k$} is an $\Ainf$-algebra $R$ over $\k$ such that its structure maps vanish on sums of shuffles, i.e.\ for all $p,q \ge 1$ the \emph{shuffle relation}
  \begin{equation} \label{eq:Cinf_shuffle}
    \sum_{\sigma \in \Sh p q} \mu^R_{p+q} \after \sigma = 0
  \end{equation}
  holds, where $\sigma$ acts from the left, i.e.\ $\sigma \act (x_1 \tensor \dots \tensor x_{p+q}) \defeq \pm x_{\inv \sigma(1)} \tensor \dots \tensor x_{\inv \sigma(p+q)}$ with the sign determined by the Koszul rule.
  A map $R \to S$ of $\Cinf$-algebras over $\k$ is a map $f$ of the underlying $\Ainf$-algebras over $\k$ such that the \emph{shuffle relation}
  \begin{equation} \label{eq:Cinf_map_shuffle}
    \sum_{\sigma \in \Sh p q} f_{p+q} \after \sigma = 0
  \end{equation}
  holds for all $p, q \ge 1$.
\end{definition}

\begin{observation} \label{obs:cdga_as_Cinf}
  Let $\k \to R$ be a morphism of cdgas.
  Then $R$, considered as an $\Ainf$-algebra over $\k$ via \cref{obs:dga_as_Ainf}, is a unital $\Cinf$-algebra over $\k$.
  Similarly a map $f \colon R \to S$ of cdgas over $\k$ yields a strict, unital map of $\Cinf$-algebra over $\k$.
\end{observation}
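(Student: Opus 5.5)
The plan is to reduce the statement to a single sign computation. By \cref{obs:dga_as_Ainf} we already know that $R$ is a unital $\Ainf$-algebra over $\k$ and that a map $f$ of cdgas gives a strict, unital $\Ainf$-morphism. So the only thing left to check is the shuffle relations \eqref{eq:Cinf_shuffle} and \eqref{eq:Cinf_map_shuffle} for $p, q \ge 1$. Since $p, q \ge 1$, these only involve structure maps of arity $p + q \ge 2$.

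First, the morphism part. For a strict map we have $f_n = 0$ for all $n \ge 2$, so every term in \eqref{eq:Cinf_map_shuffle} vanishes and there is nothing to prove. Similarly, for the algebra $R$ we have $\mu_n = 0$ for $n \ge 3$. Hence \eqref{eq:Cinf_shuffle} holds trivially whenever $p + q \ge 3$, and the only case to check is $p = q = 1$.

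For $p = q = 1$, the $(1,1)$-shuffles are the identity and the transposition. The relation therefore reads
\[ \mu_2(\shift a, \shift b) + (-1)^{\deg{\shift a} \deg{\shift b}} \mu_2(\shift b, \shift a) = 0 \]
for homogeneous $a, b \in R$. The Koszul sign is computed with respect to the shifted degrees $\deg{\shift a} = \deg a - 1$ and $\deg{\shift b} = \deg b - 1$.

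I will evaluate the second term using the definition $\mu_2(\shift b, \shift a) = (-1)^{\deg b} \shift(b a)$ together with graded commutativity $b a = (-1)^{\deg a \deg b} a b$. This gives
\[ (-1)^{(\deg a - 1)(\deg b - 1)} \mu_2(\shift b, \shift a) = (-1)^{\deg a \deg b + \deg a + \deg b + 1 + \deg b + \deg a \deg b} \shift(ab) = (-1)^{\deg a + 1} \shift(ab), \]
which is exactly the negative of $\mu_2(\shift a, \shift b) = (-1)^{\deg a} \shift(ab)$. Hence the relation holds.

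The only step requiring any care is this sign bookkeeping, in particular using the shifted degrees in the Koszul sign of the left action of the transposition. With it, $R$ is a unital $\Cinf$-algebra over $\k$, and cdga maps give strict, unital $\Cinf$-morphisms.
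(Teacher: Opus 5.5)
Your proposal is correct. The paper states this as an observation without proof, and your argument is the direct check it leaves implicit: strictness and the vanishing of $\mu_n$ for $n \ge 3$ make every shuffle relation vacuous except the $(1,1)$-relation for $\mu_2$. That relation reduces to graded commutativity of $R$ via your Koszul sign computation, which correctly uses the shifted degrees.
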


\subsection*{The construction}

In this subsection, given a $\Cinf$-algebra $R$ and a cocycle $\alpha \in R^d$ for $d$ even, we construct a new $\Cinf$-algebra $R_\alpha$.
When $R$ models a space $X$, we prove in \cref{thm:Cinf_circle} below that $R_\alpha$ models the principal $\K(\QQ, d-1)$-fibration over $X$ classified by the cohomology class represented by $\alpha$.
This is based on the fact that, when $R$ is a Sullivan algebra, the construction $R_\alpha$ recovers the well-known Sullivan model for such fibrations (see e.g.\ \cite[§15(b), Example~4]{FHT}), combined with a certain functoriality of the construction $R_\alpha$, which we prove in \cref{con:Cinf_circle_functor}.
Also see \cref{rem:Cinf_circle} for a way of coming up with the formula for $R_\alpha$ without having to guess.
In the case that $\alpha = 0$, this essentially recovers a special case of a result of Lupton--Smith \cite[Theorem~2.1]{LS07}.

\begin{proposition} \label{con:Cinf_circle}
  Let $d$ be an even integer, $\k$ a cdga, $R$ a $\Cinf$-algebra over $\k$, and $\alpha \in R^d$ a cocycle.
  Then the following construction equips $R_\alpha \defeq R \tensor \QQ \langle 1, a \rangle$, where $\deg a = d-1$, with the structure of a $\Cinf$-algebra over $\k$.
  
  For all $1 \le i \le n$, let $x_i \in R$ and $b_i \in \{1, a\}$; furthermore let $j_1 < \dots < j_m$ be those indices $i$ such that $b_i = a$.
  Then we define
  \begin{multline*}
      \mu_n \bigl( \shift (x_1 \tensor b_1), \dots, \shift (x_n \tensor b_n) \bigr) \\
      \defeq  \sum_{k_\bullet} (-1)^{\epsilon_k} \mu^R_{n + m}(\shift x_1, \dots, \shift x_{k_1}, \shift \alpha, \shift x_{k_1 + 1}, \dots, \shift x_{k_m}, \shift \alpha, \shift x_{k_m + 1}, \dots, \shift x_n) \tensor 1 \\
      + \sum_{l_\bullet} (-1)^{\epsilon_l} \mu^R_{n + m - 1}(\shift x_1, \dots, \shift x_{l_1}, \shift \alpha, \shift x_{l_1 + 1}, \dots, \shift x_{l_{m-1}}, \shift \alpha, \shift x_{l_{m-1} + 1}, \dots, \shift x_n) \tensor a
  \end{multline*}
  where $k_\bullet$ runs over those sequences $k_1 < \dots < k_m$ such that $j_i \le k_i < j_{i+1}$, and $l_\bullet$ runs over those sequences $l_1 < \dots < l_{m-1}$ such that $j_i \le l_i < j_{i+1}$; the signs are given by
  \[ \epsilon_k  \defeq  \sum_{1 \le i \le m} \sum_{j_i < p \le k_i} \deg{\shift x_p}  \qquad \text{and} \qquad  \epsilon_l  \defeq  \sum_{1 \le i \le m - 1} \sum_{j_i < p \le l_i} \deg{\shift x_p} + \sum_{j_m < p \le n} \deg{\shift x_p} \]
  (we set $j_{m+1} \defeq n + 1$ throughout).
  When $m = 0$, by convention there is no sequence $l_\bullet$ and only the empty sequence $k_\bullet$; hence we have
  \[ \mu_n \bigl( \shift (x_1 \tensor 1), \dots, \shift (x_n \tensor 1) \bigr)  =  \mu^R_n(\shift x_1, \dots, \shift x_n) \tensor 1 \]
  i.e.\ the map $\shift x \mapsto \shift (x \tensor 1)$ defines a strict map $\iota_R \colon R \to R_\alpha$ of $\Cinf$-algebras.
\end{proposition}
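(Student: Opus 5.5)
The plan is to avoid checking the $\Ainf$ relations term by term. Instead I will realize $\mu^{R_\alpha}$ as a transfer of the structure along a twisting by a Maurer--Cartan-type element, working on the bar construction.

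\textbf{Coalgebra reformulation.} A $\Cinf$-structure on $R$ is a square-zero coderivation $D_R$ of the tensor coalgebra $\T^c(\shift R)$ over $\k$, with components $\mu^R_n$, which vanishes on shuffle products; equivalently, $D_R$ is a coderivation of the cofree Lie coalgebra. I will write $\shift R_\alpha = \shift R \oplus \shift R\, a$. The formula in the statement then says the following. A word in $\T^c(\shift R_\alpha)$ containing $m$ letters tagged by $a$ is sent to a sum over ways of replacing each tag $a$ at position $j_i$ by an insertion of $\shift\alpha$ somewhere in the block between $j_i$ and $j_{i+1}$, after which $D_R$ is applied. In the $\tensor a$ summand, the last tag is kept and all letters to its right are passed over it, which accounts for the extra sign $\sum_{j_m<p\le n}\deg{\shift x_p}$.

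\textbf{Conjugation identity.} Define a $\k$-linear map $\Phi$ that performs exactly these insertions. On tensor coalgebras it is conjugation by the ``exponential of inserting $\shift\alpha$''. Since $\deg\shift\alpha = d-1$ is odd and $\shift\alpha$ is a $\mu_1$-cocycle, I expect $\mu^{R_\alpha}$ to be equal to $D_R$ conjugated by $\Phi$, together with a contraction that records the remaining $a$. The key identity to establish is
\[ D_{R_\alpha}\circ D_{R_\alpha} \;=\; (\text{terms built from } D_R\circ D_R) \;+\; (\text{terms containing } \mu^R_1(\shift\alpha)) \;=\; 0. \]
This will be proved by splitting $D_{R_\alpha}^2$ according to the outer output tag ($1$ or $a$) and the inner output tag. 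Compositions in which the inner operation outputs $\tensor a$ reintroduce an $a$, and this $a$ is then replaced by an $\shift\alpha$ in the outer operation. Summing over positions, these terms recombine with the terms where the inner output is $\tensor 1$ into the single expression
\[ \sum \mu^R\circ(\id\otimes\mu^R\otimes\id) \]
evaluated on words with all the $\shift\alpha$ inserted. That expression vanishes by \eqref{eq:Ainf}.

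\textbf{Main obstacle.} The hardest part is the sign and range bookkeeping in this recombination. One must show that the insertion ranges $j_i\le k_i<j_{i+1}$ for the composite match exactly the ranges of the two-step process. A second point needs care: an inner operation may consume a tag $a$ but output a $1$. Its inserted $\shift\alpha$ then lies inside the inner block, and this must match the terms where the $\shift\alpha$ sits inside the inner $\mu^R$ in the one-step expansion. I would first check $n\le 3$ with $m\le 2$ by hand to fix the signs $\epsilon_k,\epsilon_l$, and then argue in general via the coalgebra map $\Phi$.

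\textbf{Shuffle relation and final claims.} For the shuffle relation, take a shuffle of two words. Every way of inserting $\shift\alpha$ into the shuffled word is a shuffle of insertions into the two pieces. The $\tensor 1$ part therefore vanishes by \eqref{eq:Cinf_shuffle}, because $\shift\alpha$ has odd degree and so Koszul signs match. The $\tensor a$ part needs a separate argument, since the kept last tag breaks the symmetry. Here I would use that the shuffle relation for $\mu^R_{n+m-1}$ allows the letters after the last tag to be shuffled past the position where that $a$ was. The final claims, the $m=0$ formula and strictness of $\iota_R$, are immediate from the definitions.
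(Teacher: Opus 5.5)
Your proposal has a genuine gap in the proof of the $\Ainf$ relations. You claim that, after summing over insertion positions, $D_{R_\alpha}\circ D_{R_\alpha}$ recombines into $\sum \mu^R\circ(\id\otimes\mu^R\otimes\id)$ evaluated on words with all $\shift\alpha$'s inserted. That expression then vanishes by \eqref{eq:Ainf} together with $\mu^R_1(\shift\alpha)=0$.

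This argument never uses the shuffle relations of $R$. It would therefore apply verbatim to any $\Ainf$-algebra $R$, since the formulas for $\mu_n$ only involve the $\mu^R_k$. For general $\Ainf$-algebras the conclusion is false. Take for $R$ the free graded algebra on two generators $x,y$ of degree $2$ with zero differential, regarded as an $\Ainf$-algebra via \cref{obs:dga_as_Ainf}, and take $\alpha=x$. Since $\mu^R_{\ge 3}=0$, the $R\tensor 1$-component of the arity-$2$ relation for $R_\alpha$ on $(\shift(1\tensor a),\shift(y\tensor 1))$ is $\mu^R_2(\shift\alpha,\shift y)+(-1)^{\deg{\shift y}}\mu^R_2(\shift y,\shift\alpha)=\shift(xy-yx)\neq 0$. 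This is exactly a $(1,1)$-shuffle sum in the odd letter $\shift\alpha$. So commutativity of $R$ is indispensable for the $\Ainf$ relations of $R_\alpha$, not only for its shuffle relations.

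Concretely, the insertion ranges of your two-step expansion and the one-step expansion do not match, and no reindexing fixes this. Let $[l,r]$ be the block on which the inner operation acts, and let $j_s<l$ be the last tag to its left.

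\begin{itemize}
  \item In the one-step expansion, the $\shift\alpha$ belonging to $j_s$ may sit inside the inner $\mu^R$, in front of the first tag of the block. This never happens in $D_{R_\alpha}^2$, because the inner operation only inserts $\shift\alpha$'s to the right of its own tags.
  \item Conversely, suppose the block contains a tag and the inner operation outputs $\tensor 1$. Then the outer operation may place the $\shift\alpha$ of $j_s$ right after the inner output, i.e.\ beyond $x_r$. This lies outside its one-step range $[j_s,j_{s+1})$.
\end{itemize}

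What relates these two families is the $(1,N)$-shuffle relation of $R$ for the single odd letter $\shift\alpha$, summed over insertion patterns. It lets you move an inserted $\shift\alpha$ from the last block of the inner word to its initial block (\cref{lemma:Cinf_even_eq_2}), which turns the ``jumping'' terms into one-step terms. It also shows that the remaining one-step terms, with a $\shift\alpha$ in every block including the initial one, sum to zero (\cref{lemma:Cinf_even_eq}). This is how the paper closes the argument in \cref{lemma:Cinf_circle}.

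If you want to keep your bar-construction picture, note two facts. First, ``inserting $\shift\alpha$ everywhere'' is shuffle multiplication by $\shift\alpha$. Second, the $\Cinf$ condition says precisely that $D_R$ is a derivation of the shuffle product. That is the input your conjugation identity is missing. You would still have to compare the block-restricted insertions in $\mu^{R_\alpha}$ with full shuffles, which is the content of those two lemmas.

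A lesser point: your treatment of the $\tensor a$-part of the shuffle relation is only a heuristic. The paper handles it with an explicit free involution pairing off cancelling terms.
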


\begin{proof}
  Observe that $R_\alpha$ is indeed a $\k$-module, and that the maps $\mu_n$ are $\k$-multilinear.
  Then the claim follows from \cref{lemma:Cinf_circle_shuffle,lemma:Cinf_circle} below.
\end{proof}

\begin{observation}
  In the situation of \cref{con:Cinf_circle}, we in particular have
  \[ \mu_1 \bigl( \shift (y \tensor a) \bigr)  =  \mu^R_2(\shift y, \shift \alpha) \tensor 1 + \mu^R_1(\shift y) \tensor a \]
  in arity $1$, and
  \begin{align*}
    \mu_2 \bigl( \shift (x \tensor 1), \shift(y \tensor a) \bigr) &= \mu^R_3(\shift x, \shift y, \shift \alpha) \tensor 1 + \mu^R_2(\shift x, \shift y) \tensor a \\
    \mu_2 \bigl( \shift (x \tensor a), \shift(y \tensor 1) \bigr) &= \mu^R_3(\shift x, \shift \alpha, \shift y) \tensor 1 + (-1)^{\deg{\shift y}} \mu^R_3(\shift x, \shift y, \shift \alpha) \tensor 1 + (-1)^{\deg{\shift y}} \mu^R_2(\shift x, \shift y) \tensor a \\
    \mu_2 \bigl( \shift (y_1 \tensor a), \shift(y_2 \tensor a) \bigr) &= \mu^R_4(\shift y_1, \shift \alpha, \shift y_2, \shift \alpha) \tensor 1 + \mu^R_3(\shift y_1, \shift \alpha, \shift y_2) \tensor a
  \end{align*}
  in arity $2$, as well as
  \begin{multline*}
    \mu_n \bigl( \shift (y_1 \tensor a), \dots, \shift(y_n \tensor a) \bigr) \\
    = \mu_{2n}(\shift y_1, \shift \alpha, \shift y_2, \shift \alpha, \dots, \shift y_n, \shift \alpha) \tensor 1 + \mu_{2n-1}(\shift y_1, \shift \alpha, \shift y_2, \shift \alpha, \dots, \shift y_n) \tensor a
  \end{multline*}
  in any arity $n$.
\end{observation}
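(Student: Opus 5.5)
The Observation follows by specializing the defining formula of \cref{con:Cinf_circle}. For each listed case, I will determine the set $J = \{j_1 < \dots < j_m\}$ of positions carrying $a$. Then I will enumerate the admissible sequences $k_\bullet$ (with $j_i \le k_i < j_{i+1}$, where $j_{m+1} = n+1$) and $l_\bullet$ (of length $m-1$, same constraints), and evaluate the signs $\epsilon_k$ and $\epsilon_l$. No structural input beyond the definition is needed; the $\mu$'s on the right-hand sides are those of $R$, i.e.\ $\mu^R$.

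\emph{Arity $1$, with $b_1 = a$.} Here $m = 1$ and $j_1 = 1$. The only choice is $k_1 = 1$, giving $\mu^R_2(\shift y, \shift \alpha) \tensor 1$ with $\epsilon_k = 0$ (empty range $1 < p \le 1$). The empty $l_\bullet$ gives $\mu^R_1(\shift y) \tensor a$ with $\epsilon_l = \sum_{1 < p \le 1} = 0$.

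\emph{Arity $2$, case $(1, a)$.} Here $j_1 = 2$, so $k_1 = 2$, giving $\mu^R_3(\shift x, \shift y, \shift \alpha)$ with sign $0$. The empty $l_\bullet$ gives $\mu^R_2(\shift x, \shift y) \tensor a$ with $\epsilon_l = \sum_{2 < p \le 2} = 0$.

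\emph{Arity $2$, case $(a, 1)$.} Here $j_1 = 1$, so $k_1 \in \{1, 2\}$. The choice $k_1 = 1$ gives $\mu^R_3(\shift x, \shift \alpha, \shift y)$ with sign $0$. The choice $k_1 = 2$ gives $\mu^R_3(\shift x, \shift y, \shift \alpha)$ with $\epsilon_k = \deg{\shift y}$. For $l_\bullet$ empty, $\epsilon_l = \sum_{1 < p \le 2} \deg{\shift x_p} = \deg{\shift y}$, which produces the sign on the $a$-term.

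\emph{Arity $2$, case $(a, a)$.} Here $j = (1, 2)$, so necessarily $k = (1, 2)$ and $l_1 = 1$. All sign ranges are empty, giving the two stated terms.

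\emph{General arity, all $b_i = a$.} Here $j_i = i$, and the constraints $i \le k_i < i+1$ force $k_i = i$ and $l_i = i$. Every sign sum ranges over $i < p \le i$ or $n < p \le n$, hence vanishes. This yields exactly the alternating expressions $\mu^R_{2n}(\shift y_1, \shift\alpha, \dots, \shift y_n, \shift\alpha) \tensor 1$ and $\mu^R_{2n-1}(\shift y_1, \shift\alpha, \dots, \shift y_n) \tensor a$.

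The only potential pitfall is bookkeeping the sign $\epsilon_l$, whose tail sum $\sum_{j_m < p \le n}$ is the sole source of the nontrivial signs, in the $(a,1)$ case. I would double-check that case carefully; everything else is immediate.
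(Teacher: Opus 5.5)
Your proposal is correct and is exactly the intended argument: the paper records this as an observation obtained by directly specializing the formula of \cref{con:Cinf_circle}. Your enumeration of the admissible $k_\bullet$, $l_\bullet$ and of the signs $\epsilon_k$, $\epsilon_l$ checks out in every case, with $\mu_{2n}$, $\mu_{2n-1}$ in the last formula read as $\mu^R_{2n}$, $\mu^R_{2n-1}$. One small inaccuracy is in your closing remark: in the $(a,1)$ case the tail of $\epsilon_l$ is not the only source of a nontrivial sign, since $\epsilon_k$ for $k_1 = 2$ also produces the sign on the $\mu^R_3(\shift x, \shift y, \shift \alpha)$-term, as your own computation correctly shows.
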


\begin{lemma} \label{lemma:Cinf_circle_shuffle}
  Let $d$ be an even integer, $R$ a $\Cinf$-algebra, and $\alpha \in R^d$ a cocycle.
  Then the maps $\mu_n$ of \cref{con:Cinf_circle} fulfill the shuffle relations.
\end{lemma}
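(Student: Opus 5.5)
We need to show that for $p,q\ge1$ the sum over $(p,q)$-shuffles of $\mu_{p+q}$ applied to $\shift(x_1\tensor b_1),\dots,\shift(x_{p+q}\tensor b_{p+q})$ vanishes. The plan is to reduce this to the shuffle relations of $\mu^R$ itself.

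\textbf{The guiding picture.} Write $\shift(x\tensor a)$ as a formal "word" $\shift x\,\shift\alpha$, to be inserted somewhere between $\shift x$ and the next letter $\shift x'$ with $b=1$ (or to the end). In the $a$-component the final $\shift\alpha$ of the last $a$-letter is dropped. Under this picture, $\mu_n$ is $\mu^R$ evaluated on the sum over all admissible placements of the $\shift\alpha$'s. The constraint $j_i\le k_i<j_{i+1}$ says exactly that the $\alpha$ attached to the $i$-th $a$-letter floats rightward past the subsequent $1$-letters but not past the next $a$-letter. The sign $\epsilon_k$ is the Koszul sign for moving $\shift\alpha$ (of odd degree $d-1$, since $d$ is even) past the letters $\shift x_p$ it jumps over. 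Note that $\alpha$ itself has even degree $d$.

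\textbf{Step 1: decompose the shuffle sum.} Fix the $b_i$. A $(p,q)$-shuffle $\sigma$ interleaves the first $p$ inputs with the last $q$. After applying $\sigma$, the $a$-letters appear in some order, and each $\shift\alpha$ floats up to the next $a$-letter in the shuffled word. I will regroup $\sum_\sigma\sum_{k_\bullet}$ as a sum over pairs $(\sigma,$ placement$)$. Each term becomes $\mu^R_{n+m}$ applied to a word in the letters $\shift x_1,\dots,\shift x_n$ together with $m$ copies of $\shift\alpha$.

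\textbf{Step 2: match with shuffles in $R$.} Each $\alpha$ stays attached to the left block or the right block, according to its $a$-letter. So the resulting words are interleavings of two words. The first is $w_1$, made from the first $p$ letters with the $\alpha$'s of its $a$-letters inserted. The second is $w_2$, made similarly from the last $q$ letters. The key claim is that the map (shuffle, placement) $\mapsto$ (placement in $w_1$, placement in $w_2$, shuffle of $w_1$ with $w_2$) is a bijection onto the set of shuffles of $w_1,w_2$ over all admissible placements within each block. However, admissibility in the shuffled word only requires that $\alpha$ not pass the next $a$-letter of the \emph{combined} word. This matches exactly the condition that, within its own block, an $\alpha$ ends before the next $a$-letter of that block, with arbitrary interleaving with letters of the other block. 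One subtle point arises here: a block's trailing $\alpha$ may be placed after the last letter of its block. I will check that the convention $j_{m+1}=n+1$ in the combined word corresponds to the correct range in each block.

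Once the bijection holds and the signs agree, the $\tensor 1$ component of the shuffle sum becomes $\sum_{\text{placements}}\sum_{\tau\in\mathrm{Sh}(|w_1|,|w_2|)}\mu^R\circ\tau$, which vanishes by the shuffle relation for $R$. The $\tensor a$ component works the same way, except that the very last $\alpha$ of the combined word is omitted. That last $a$-letter lies in one of the two blocks, and that block's final $\alpha$ is the one dropped. The case where one block contains no $a$-letters is degenerate but handled identically.

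\textbf{Main obstacle.} The hard part is verifying that the signs $(-1)^{\epsilon}$, combined with the Koszul signs of $\sigma$, agree with the Koszul signs of $\tau$ acting on the $\alpha$-enriched words. I would handle this uniformly by observing that $\epsilon_k$ is precisely the Koszul sign of moving each $\shift\alpha$ from immediately after its $a$-letter to its final position, including the extra $\sum_{j_m<p\le n}$ term for the dropped $\alpha$ in $\epsilon_l$. Then both sides equal the Koszul sign of the same total permutation from the reference word $w_1w_2$, so the signs agree by the multiplicativity of Koszul signs. I would first sanity-check this on the arity-2 formulas in the preceding observation.
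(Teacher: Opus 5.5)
\textbf{There is a genuine gap in Step~2: the claimed bijection fails as soon as both blocks contain $a$-letters.} In the shuffled word, the $\shift\alpha$ attached to an $a$-letter must stop before the next $a$-letter of the \emph{combined} word, and that letter may belong to the other block. Your within-block condition is weaker: it lets the $\alpha$ run past $a$-letters of the other block. So your map is injective, but its image is only the proper subset of ``non-crossing'' configurations (the set $S$ in the paper's proof).

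A minimal example: take $p=q=1$ and $b_1=b_2=a$. The $\tensor 1$-component of the shuffle relation for $R_\alpha$ has just two terms, $\mu^R_4(\shift x_1,\shift\alpha,\shift x_2,\shift\alpha)$ and $\pm\mu^R_4(\shift x_2,\shift\alpha,\shift x_1,\shift\alpha)$. The $(2,2)$-shuffle relation of $R$ applied to $(\shift x_1,\shift\alpha,\shift x_2,\shift\alpha)$ has six terms. The four extra ones are $\mu^R_4(\shift x_1,\shift x_2,\shift\alpha,\shift\alpha)$ and $\mu^R_4(\shift x_2,\shift x_1,\shift\alpha,\shift\alpha)$, each occurring twice (with either copy of $\shift\alpha$ first). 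The arity-$2$ sanity check you planned would have exposed this.

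The $\tensor a$-component has the same defect in a second form. Consider the complete shuffle sum of the two blocks in which the right block's final $\alpha$ is dropped. It also contains terms where the left block's last $a$-letter comes after the right block's last $a$-letter. These are not terms of $\mu_n$, since there the left block's $\alpha$ is the one that gets dropped.

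The missing idea, which is the heart of the paper's proof, is that all these surplus terms cancel. In the $\tensor 1$-part, a surplus configuration has a pair $(s,t)$, with $x_{j_s}$ in the left block and $x_{j_t}$ in the right block, such that the $\alpha$ of $x_{j_s}$ lies after $x_{j_t}$ and the $\alpha$ of $x_{j_t}$ lies after $x_{j_s}$. For the lexicographically largest such pair, swap these two copies of $\shift\alpha$. This preserves all own-block constraints and defines a free involution, and paired terms have opposite signs because $\shift\alpha$ has odd degree.

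This is where $d$ even is genuinely used. In your argument parity only enters the sign bookkeeping; for $\shift\alpha$ of even degree the paired terms in the example above would add rather than cancel.

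Two further points are needed to make the cancellation work. First, the swap usually changes where each $\alpha$ sits relative to the letters of its own block. So the involution has to be defined on the disjoint union over all within-block placements, not for a fixed placement. Second, in the $\tensor a$-part the involution must run over the union of the two families (dropping the left, respectively right, block's final $\alpha$), pairing terms across them, as the paper does.

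Once this cancellation is in place, your plan of comparing Koszul signs against a reference word is a reasonable way to check the signs of the surviving terms.
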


\begin{proof}
  Let $p$ and $q$ be positive integers, and $n \defeq p + q$.
  Then the $(p,q)$-shuffle relation \eqref{eq:Cinf_shuffle} for $\mu_n$ reads
  \begin{equation} \label{eq:Cinf_circle_shuffle}
    \sum_{\sigma \in \Sh{p}{q}} \pm \mu_n \bigl( \shift (x_{\inv{\sigma}(1)} \tensor b_{\inv{\sigma}(1)}), \dots, \shift (x_{\inv{\sigma}(n)} \tensor b_{\inv{\sigma}(n)}) \bigr) = 0
  \end{equation}
  for all $x_i \in R$ and $b_i \in \{1, a\}$.
  Let $j_1 < \dots < j_m$ be those indices $1 \le i \le p$ such that $b_i = a$, and let $j_{m+1} < \dots < j_{m+m'}$ be those indices $p + 1 \le i \le n$ such that $b_i = a$.

  We observe that the part of the left-hand side of \eqref{eq:Cinf_circle_shuffle} that lands in $R \tensor 1$ is equal to
  \[ \sum_{k_\bullet} \pm \sum_{\sigma \in S} \mu^R_{n+m+m'} ( \sigma \act y_\bullet ) \tensor 1 \]
  where
  \begin{equation*}
    y_\bullet  \defeq  (\shift x_1, \dots, \shift x_{k_1}, \shift \alpha, \shift x_{k_1 + 1}, \dots, \shift x_{k_{m+m'}}, \shift \alpha, \shift x_{k_{m+m'} + 1}, \dots, \shift x_n)
  \end{equation*}
  and $k_\bullet$ runs over all sequences $k_1 < \dots < k_{m+m'}$ such that $j_i \le k_i < j_{i+1}$ (where we set $j_{m + m' + 1} \defeq n + 1$) and such that $k_i \le p$ for $i \le m$.
  The set $S \subseteq \Sh {p+m} {q+m'}$ consists of those shuffles $\sigma$ such that
  \begin{equation} \label{eq:sigma}
    \sigma(k_s + s) < \sigma(j_t + t - 1)  \qquad \text{or} \qquad  \sigma(k_t + t) < \sigma(j_s + s - 1) 
  \end{equation}
  for all $1 \le s \le m$ and $m + 1 \le t \le m + m'$.
  
  We will now prove that $\sum_{\sigma \in S} \mu^R_{n+m+m'} ( \sigma \act y_\bullet ) = 0$ for any $k_\bullet$.
  To this end, we define a free involution $\tau$ on $\Sh {p+m} {q+m'} \setminus S$.
  Let $\sigma \in \Sh {p+m} {q+m'} \setminus S$, and let $(s, t)$ be the lexicographically largest tuple for which \eqref{eq:sigma} does not hold.
  Then we define $\tau(\sigma)(k_s + s) \defeq \sigma(k_t + t)$, $\tau(\sigma)(k_t + t) \defeq \sigma(k_s + s)$ and $\tau(\sigma)(i) \defeq \sigma(i)$ otherwise.
  One checks that $\tau(\sigma)$ is again a $(p+m, q+m')$-shuffle, and that $(s, t)$ is still the lexicographically largest tuple for which \eqref{eq:sigma} does not hold for $\tau(\sigma)$.
  Hence $\tau$ is in fact an involution.
  Since $\mu^R_{n+m+m'} ( \tau(\sigma) \act y_\bullet ) = - \mu^R_{n+m+m'} ( \sigma \act y_\bullet )$, it follows that
  \[ \sum_{\sigma \in S} \mu^R_{n+m+m'} ( \sigma \act y_\bullet )  =  \sum_{\sigma \in \Sh {p+m} {q+m'}} \mu^R_{n+m+m'} ( \sigma \act y_\bullet ) \]
  which vanishes by the corresponding shuffle relation of $R$.

  The part of the left-hand side of \eqref{eq:Cinf_circle_shuffle} that lands in $R \tensor a$ is equal to
  \begin{equation} \label{eq:shuffle_a}
    \sum_{k_\bullet, l_\bullet} \pm \sum_{\sigma \in S} \mu^R_{n+m+m'-1} \bigl( \sigma \act (y_\bullet, z_\bullet) \bigr) \tensor a + \sum_{k'_\bullet, l'_\bullet} \pm \sum_{\sigma' \in S'} \mu^R_{n+m+m'-1} \bigl( \sigma' \act (y'_\bullet, z'_\bullet) \bigr) \tensor a
  \end{equation}
  where
  \begin{align*}
    y_\bullet  &\defeq  (\shift x_1, \dots, \shift x_{k_1}, \shift \alpha, \shift x_{k_1 + 1}, \dots, \shift x_{k_m}, \shift \alpha, \shift x_{k_m + 1}, \dots, \shift x_p) \\
    z_\bullet  &\defeq  (\shift x_{p+1}, \dots, \shift x_{l_{m+1}}, \shift \alpha, \shift x_{l_{m+1} + 1}, \dots, \shift x_{l_{m+m'-1}}, \shift \alpha, \shift x_{l_{m+m'-1} + 1}, \dots, \shift x_n) \\
    y'_\bullet  &\defeq  (\shift x_1, \dots, \shift x_{k'_1}, \shift \alpha, \shift x_{k'_1 + 1}, \dots, \shift x_{k'_{m-1}}, \shift \alpha, \shift x_{k'_{m-1} + 1}, \dots, \shift x_p) \\
    z'_\bullet  &\defeq  (\shift x_{p+1}, \dots, \shift x_{l'_{m+1}}, \shift \alpha, \shift x_{l'_{m+1} + 1}, \dots, \shift x_{l'_{m+m'}}, \shift \alpha, \shift x_{l'_{m+m'} + 1}, \dots, \shift x_n)
  \end{align*}
  and $k_\bullet$, $l_\bullet$, $k'_\bullet$, and $l'_\bullet$ run over all strictly increasing sequences of the appropriate length such that $j_i \le k_i < \min(j_{i+1}, p + 1)$ and the same for $k'_\bullet$, and $j_i \le l_i < j_{i+1}$ and the same for $l'_\bullet$ (here we set $j_{m+m'+1} \defeq n+1$).
  The set $S \subseteq \Sh {p+m} {q+m'-1}$ consists of those shuffles $\sigma$ such that
  \[ \sigma(k_s + s) < \sigma(j_t + t - 1)  \qquad \text{or} \qquad  \sigma(l_t + t) < \sigma(j_s + s - 1) \]
  for all $1 \le s \le m$ and $m + 1 \le t \le m + m'$, ignoring any inequality involving $l_{m + m'}$.
  Similarly, the set $S' \subseteq \Sh {p+m-1} {q+m'}$ consists of those shuffles $\sigma'$ such that
  \[ \sigma'(k_s + s) < \sigma'(j_t + t - 2)  \qquad \text{or} \qquad  \sigma'(l_t + t - 1) < \sigma'(j_s + s - 1) \]
  for all $1 \le s \le m$ and $m + 1 \le t \le m + m'$, ignoring any inequality involving $k'_m$.
  
  % Consider some $k_\bullet$, $l_\bullet$, and $\sigma \in S$ such that $\sigma(j_{m+m'} + m + m' - 1) < \sigma(k_m + m)$.
  % Then we can construct a $k'_\bullet$, $l'_\bullet$, and $\sigma' \in S'$ such that $\sigma(j_m + m - 1) < \sigma(l'_{m+m'} + m + m' - 1)$: define $k'_i \defeq k_i$, $l'_{m+m'} \defeq \sigma(k_m + m) - m - m' + 1$
  We claim that \eqref{eq:shuffle_a} is equal to
  \[ \sum_{k_\bullet, l_\bullet} \pm \sum_{\sigma \in \Sh {p+m} {q+m'-1}} \mu^R_{n+m+m'-1} \bigl( \sigma \act (y_\bullet, z_\bullet) \bigr) \tensor a + \sum_{k'_\bullet, l'_\bullet} \pm \sum_{\sigma' \in \Sh {p+m-1} {q+m'}} \mu^R_{n+m-1+m'} \bigl( \sigma' \act (y'_\bullet, z'_\bullet) \bigr) \tensor a \]
  which vanishes by the shuffle relations of $R$.
  To see this, we observe that, similarly to above, the set
  \[ \coprod_{k_\bullet,l_\bullet} (\Sh {p+m} {q+m'-1} \setminus S)  \cop  \coprod_{k'_\bullet,l'_\bullet} (\Sh {p+m-1} {q+m'} \setminus S') \]
  admits a free involution such that the two terms of the sum above corresponding to any one orbit exactly cancel each other.
\end{proof}

\begin{lemma} \label{lemma:Cinf_circle}
    Let $d$ be an even integer, $R$ a $\Cinf$-algebra, and $\alpha \in R^d$ a cocycle.
    Then the maps $\mu_n$ of \cref{con:Cinf_circle} fulfill the $\Ainf$-algebra relations.
\end{lemma}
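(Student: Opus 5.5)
The plan is to verify the $\Ainf$-relations \eqref{eq:Ainf} for $R_\alpha$ by reducing them to the $\Ainf$-relations of $R$ applied to words obtained by inserting copies of $\shift \alpha$. Fix $n$ and inputs $\shift(x_1 \tensor b_1), \dots, \shift(x_n \tensor b_n)$, with $a$-positions $j_1 < \dots < j_m$. I would split the left-hand side of \eqref{eq:Ainf} into its components in $R \tensor 1$ and in $R \tensor a$ and treat them separately.

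For a term $\mu_{r+1+t}(\id^{\tensor r} \tensor \mu_s \tensor \id^{\tensor t})$, expand both the inner and the outer operation by the defining formula of \cref{con:Cinf_circle}. The inner $\mu_s$ either outputs something in $R \tensor 1$, in which case all $a$'s in the block $r+1, \dots, r+s$ have been paired with an $\alpha$ inside the block, or it outputs something in $R \tensor a$. In the latter case the last $a$ of the block is unpaired, and the outer operation then inserts an $\alpha$ after the output, in the range allowed before the next $a$.

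The key step is a bijection. On one side are the pairs (choice of inner and outer insertion data); on the other are the pairs consisting of an admissible insertion sequence $k_\bullet$ (respectively $l_\bullet$) for the whole word together with a splitting $r+s'+t$ of the resulting word $y_\bullet$ of length $n+m$ (respectively $n+m-1$). Under this bijection, the $R \tensor 1$-component becomes
\[ \sum_{k_\bullet} \pm \sum_{r+s'+t = n+m} \mu^R_{r+1+t} \bigl( \id^{\tensor r} \tensor \mu^R_{s'} \tensor \id^{\tensor t} \bigr)(y_\bullet), \]
and similarly for the $R \tensor a$-component with the $l_\bullet$. These sums vanish by \eqref{eq:Ainf} for $R$.

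In setting up the bijection, two kinds of terms on the $R$-side must be accounted for.
\begin{itemize}
\item Terms where the inner $\mu^R_{s'}$ ends exactly at an inserted $\shift\alpha$ correspond to the inner $\mu_s$ outputting $\tensor a$, with the outer operation inserting $\alpha$ immediately after it.
\item Terms where the inner operation starts at an inserted $\shift\alpha$ do not correspond to anything on the $R_\alpha$-side. Such an inner operation either is $\mu^R_1(\shift\alpha) = 0$, since $\alpha$ is a cocycle, or it begins with $\alpha$ and contains further entries. These terms must be shown to cancel. I expect to handle them by moving the $\alpha$ using the shuffle relation \eqref{eq:Cinf_shuffle} of $\mu^R$, in the same style as the involution argument in \cref{lemma:Cinf_circle_shuffle}, or alternatively by a reindexing showing that they pair off between adjacent choices $k_i$ and $k_i - 1$.
\end{itemize}
The arity-one case, $\mu_1 \after \mu_1 = 0$ on $\shift(y \tensor a)$, serves as a sanity check. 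There it reduces to
\[ \mu^R_1\mu^R_2(\shift y, \shift\alpha) + \mu^R_2(\mu^R_1 \shift y, \shift\alpha) = 0, \]
which follows from the $\Ainf$-relation of $R$ together with $\mu^R_1(\shift\alpha) = 0$.

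The main obstacle is the combinatorics together with the signs. Concretely, this means proving that the bijection is well defined at the block boundaries. The delicate cases are inner blocks that contain an $a$-position whose $\alpha$ the outer operation wants to place outside the block, and terms whose inner operation starts at an inserted $\alpha$. One also has to check that the Koszul signs $\epsilon_k$, $\epsilon_l$ compose correctly; here $\deg\shift\alpha = d-1$ being odd should make each inserted $\shift\alpha$ behave like a sign-twisted partner of $\shift a$. I would first attempt this conceptually, by viewing the formula for $\mu_n$ as the conjugation of the bar differential of $R$ by a coalgebra map
\[ \T(\shift R_\alpha) \to \T(\shift R) \tensor \QQ\langle 1, a\rangle \]
that replaces each $\shift(x \tensor a)$ by $\shift x$ followed by a ``geometric series'' of $\alpha$-insertions. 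If that fails, I would fall back on the explicit term-by-term matching described above.
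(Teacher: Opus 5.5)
Your overall strategy is the paper's: compare with the $\Ainf$-relations of $R$ on the words $y_\bullet$ with inserted $\shift \alpha$'s, and repair the mismatch using shuffle relations. However, your near-bijection is false as stated, and the mismatch you describe is not the actual one.

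\textbf{Missed terms on the $R_\alpha$-side.} Suppose the inner $\mu_s$ lands in $R \tensor 1$. Then the outer operation may insert the $\alpha$ belonging to the last $a$-entry $x_{j_s}$ \emph{before} the block at a position \emph{after} the inner output, because the output no longer counts as an $a$-entry. If the block contains no $a$-entry, this is harmless. If it does contain one, no admissible $y_\bullet$ produces such a term, since there the $s$-th $\shift \alpha$ must precede $x_{j_{s+1}}$.

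Already for $n = 2$ and $b_1 = b_2 = a$ this happens. The composite $\mu_2(\shift (x_1 \tensor a), \mu_1(\shift (x_2 \tensor a)))$ contains $\pm \mu^R_3(\shift x_1, \mu^R_2(\shift x_2, \shift \alpha), \shift \alpha) \tensor 1$. The only admissible word, however, is $(\shift x_1, \shift \alpha, \shift x_2, \shift \alpha)$.

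\textbf{Misidentified terms on the $R$-side.} The unmatched terms are not just those whose inner operation starts at an inserted $\shift \alpha$. They are all terms whose inner block contains some $\alpha_i$ without $x_{j_i}$; such a block may also start at any $x_p$ with $j_i < p \le k_i$. These terms do not cancel among themselves. In the example, $\mu^R_3(\shift x_1, \mu^R_2(\shift \alpha, \shift x_2), \shift \alpha)$ is in general non-zero. It is cancelled only by the term displayed above, after rewriting $\mu^R_2(\shift x_2, \shift \alpha) = \pm \mu^R_2(\shift \alpha, \shift x_2)$. So the cancellation you plan to prove is false.

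Your first bullet is also misassigned. A block ending at $\alpha_i$ with $x_{j_i}$ inside comes from the $R \tensor 1$-component of $\mu_s$. An $R \tensor a$-output followed by an $\alpha$ corresponds to a block ending at $x_r$.

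\textbf{What closes the gap.} It is the paper's pair of shuffle lemmas, applied to the correct classes.

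First, \cref{lemma:Cinf_even_eq_2} is obtained by summing $(1,N)$-shuffle relations with $\shift \alpha$ in front. It rewrites the signed sum over inner blocks with one $\alpha$ after each inner $a$-entry. The result is minus the signed sum with one $\alpha$ before the first inner $a$-entry and one after each inner $a$-entry except the last. Reading the jumped-over $\alpha$ as the $\alpha$ of the last inner $a$-entry, every jumping term becomes an $R$-side term of an admissible $y_\bullet$.

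Second, consider the $R$-side terms still unaccounted for: the block contains some $\alpha_i$ without $x_{j_i}$, and every inner $a$-entry has its $\alpha$ inside. These vanish by \cref{lemma:Cinf_even_eq}, after grouping by the $x$-content of the block and summing over \emph{all} positions of the inner $\alpha$'s. This is not a pairing of $k_i$ with $k_i - 1$. For $b = (a, 1, 1)$, take the three terms with outer operation $\mu^R_2$, first input $\shift x_1$, and inner operations $\mu^R_3(\shift \alpha, \shift x_2, \shift x_3)$, $\mu^R_3(\shift x_2, \shift \alpha, \shift x_3)$, $\mu^R_3(\shift x_2, \shift x_3, \shift \alpha)$. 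They cancel only as a triple.

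Your bar-construction alternative meets the same obstruction. The insertion map is not compatible with deconcatenation, because a cut can separate $x_{j_i}$ from its $\alpha$. You have not specified a coalgebra structure on the target that absorbs this.
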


\begin{proof}
  The $n$-th $\Ainf$-algebra relation \eqref{eq:Ainf} for $R_\alpha$ reads
  \begin{equation} \label{eq:Cinf_circle_Ainf}
    \sum_{1 \le l \le r \le n} (-1)^{\epsilon_l} \mu_{n+l-r} \bigl( \shift (x_1 \tensor b_1), \dots, \mu_{r-l+1} \bigl( \shift (x_l \tensor b_l), \dots, \shift (x_r \tensor b_r) \bigr), \dots, \shift (x_n \tensor b_n) \bigr) = 0
  \end{equation}
  where $x_i \in R$, $b_i \in \{1, a\}$, and $\epsilon_l \defeq \sum_{i = 1}^{l-1} \deg{\shift (x_1 \tensor b_1)}$.
  When all $b_i$ are equal to $1$, this expands to the $n$-th $\Ainf$-algebra relation for $R$.
  Hence we can assume that at least one $b_i$ is equal to $a$; let $j_1 < \dots < j_m$ be those indices $i$ such that $b_i = a$.
  We will consider the term of the sum above corresponding to some $1 \le l \le r \le n$.
  
  Let $j'_1 < \dots < j'_{m'}$ be those $j_i$ such that $l \le j_i \le r$.
  Then
  \[ \mu_{r-l+1} \bigl( \shift (x_l \tensor b_l), \dots, \shift (x_r \tensor b_r) \bigr) \]
  is a sum of all terms of the form
  \begin{equation*}
    \pm \mu^R_{r-l+1 + m'}(v^1_\bullet) \tensor 1  \displayand  \pm \mu^R_{r - l + m'}(v^a_\bullet) \tensor a
  \end{equation*}
  where
  \begin{align*}
    v^1_\bullet &\defeq (\shift x_l, \dots, \shift x_{k_1}, \shift \alpha, \shift x_{k_1 + 1}, \dots, \shift x_{k_{m'}}, \shift \alpha, \shift x_{k_{m'} + 1}, \dots, \shift x_r) \\
    v^a_\bullet &\defeq (\shift x_l, \dots, \shift x_{k_1}, \shift \alpha, \shift x_{k_1 + 1}, \dots, \shift x_{k_{m'-1}}, \shift \alpha, \shift x_{k_{m'-1} + 1}, \dots, \shift x_r)
  \end{align*}
  where in either case $k_\bullet$ runs over all strictly increasing sequences of the appropriate length such that $j'_i \le k_i < j'_{i+1}$ (where we set $j'_{m' + 1} \defeq r + 1$); when $m' = 0$, we omit the terms involving $v^a_\bullet$.

  Hence the left-hand side of \eqref{eq:Cinf_circle_Ainf} is the sum of all terms of the form
  \begin{equation} \label{eq:Cinf_circle_Ainf_terms}
    \begin{aligned}
      \pm \mu^R_{n+l-r+m-m'}(u_\bullet, \mu^R_{r-l+1 + m'}(v^1_\bullet), w^{11}_\bullet) &\tensor 1 \\
      \pm \mu^R_{n+l-r+m-m'}(\hat u_\bullet, \mu^R_{r-l+1 + m'}(v^1_\bullet), w^{a1}_\bullet) &\tensor 1 \\
      \pm \mu^R_{n+l-r+m-m'+1}(u_\bullet, \mu^R_{r-l + m'}(v^a_\bullet), w^{a1}_\bullet) &\tensor 1 \\
      \pm \mu^R_{n+l-r+m-m'-1}(u_\bullet, \mu^R_{r-l+1 + m'}(v^1_\bullet), w^{1a}_\bullet) &\tensor a \\
      \pm \mu^R_{n+l-r+m-m'-1}(\hat u_\bullet, \mu^R_{r-l+1 + m'}(v^1_\bullet), w^{aa}_\bullet) &\tensor a \\
      \pm \mu^R_{n+l-r+m-m'}(u_\bullet, \mu^R_{r-l + m'}(v^a_\bullet), w^{aa}_\bullet) &\tensor a
    \end{aligned}
  \end{equation}
  whose ingredients we now define.
  The terms involving $\hat u_\bullet$ correspond to one $\shift \alpha$ ``jumping over'' the inner $\mu^R$, and hence only exist when $j_1 < l$.
  We let $s$ denote the largest $i$ such that $j_i < l$ (or $0$ if none such exists), and define
  \begin{align*}
    u_\bullet &\defeq (\shift x_1, \dots, \shift x_{k_1}, \shift \alpha, \shift x_{k_1 + 1}, \dots, \shift x_{k_s}, \shift \alpha, \shift x_{k_s + 1}, \dots, \shift x_{l-1}) \\
    \hat u_\bullet &\defeq (\shift x_1, \dots, \shift x_{k_1}, \shift \alpha, \shift x_{k_1 + 1}, \dots, \shift x_{k_{s-1}}, \shift \alpha, \shift x_{k_{s-1} + 1}, \dots, \shift x_{l-1})
  \end{align*}
  where in both cases $k_\bullet$ runs over over all strictly increasing sequences of the appropriate length such that $j_i \le k_i < j_{i+1}$.
  We furthermore let $t$ denote the smallest $i$ such that $j_i > r$ (or $m + 1$ if none such exists), and define
  \begin{align*}
    w^{11}_\bullet &\defeq (\shift x_{r+1}, \dots, \shift x_{k_t}, \shift \alpha, \shift x_{k_t + 1}, \dots, \shift x_{k_m}, \shift \alpha, \shift x_{k_m + 1}, \dots, \shift x_n) \\
    w^{a1}_\bullet &\defeq (\shift x_{r+1}, \dots, \shift x_{k_{t-1}}, \shift \alpha, \shift x_{k_{t-1} + 1}, \dots, \shift x_{k_m}, \shift \alpha, \shift x_{k_m + 1}, \dots, \shift x_n) \\
    w^{1a}_\bullet &\defeq (\shift x_{r+1}, \dots, \shift x_{k_t}, \shift \alpha, \shift x_{k_t + 1}, \dots, \shift x_{k_{m-1}}, \shift \alpha, \shift x_{k_{m-1} + 1}, \dots, \shift x_n) \\
    w^{aa}_\bullet &\defeq (\shift x_{r+1}, \dots, \shift x_{k_{t-1}}, \shift \alpha, \shift x_{k_{t-1} + 1}, \dots, \shift x_{k_{m-1}}, \shift \alpha, \shift x_{k_{m-1} + 1}, \dots, \shift x_n)
  \end{align*}
  where in each case $k_\bullet$ runs over all strictly increasing sequences of the appropriate length such that $\max(j_i, r) \le k_i < j_{i+1}$ (where $j_{m+1} \defeq n + 1$).

  All terms as in \eqref{eq:Cinf_circle_Ainf_terms} that do not involve $\hat u_\bullet$ are of the forms
  \begin{align}
    (-1)^{\epsilon_p} \mu^R_{p + n + m - q} ( y^1_1, \dots, \mu^R_{q - p + 1} (y^1_p, \dots, y^1_q), \dots, y^1_{n + m}) &\tensor 1 \label{eq:y_1} \\
    (-1)^{\epsilon_{p'}} \mu^R_{p' + n + m - 1 - q'} ( y^a_1, \dots, \mu^R_{q' - p' + 1} (y^a_{p'}, \dots, y^a_{q'}), \dots, y^a_{n + m - 1}) &\tensor a \label{eq:y_a}
  \end{align}
  for some $1 \le p \le q \le n + m$ and $1 \le p' \le q' \le n + m - 1$, where
  \begin{align*}
    y^1_\bullet &\defeq (\shift x_1, \dots, \shift x_{k_1}, \shift \alpha, \shift x_{k_1 + 1}, \dots, \shift x_{k_m}, \shift \alpha, \shift x_{k_m + 1}, \dots, \shift x_n) \\
    y^a_\bullet &\defeq (\shift x_1, \dots, \shift x_{k_1}, \shift \alpha, \shift x_{k_1 + 1}, \dots, \shift x_{k_{m-1}}, \shift \alpha, \shift x_{k_{m-1} + 1}, \dots, \shift x_n)
  \end{align*}
  where in either case $k_\bullet$ runs over all strictly increasing sequences of the appropriate length such that $j_i \le k_i < j_{i+1}$ (where $j_{m+1} \defeq n + 1$); the signs are given by $\epsilon_p \defeq \sum_{i = 1}^{p-1} \deg {y_i}$ and analogously for $\epsilon_{p'}$.
  We note that the sum over \emph{all} terms of the form \eqref{eq:y_1} (not just those arising from \eqref{eq:Cinf_circle_Ainf_terms}) is zero by the $\Ainf$-algebra relation for $R$, and similarly for \eqref{eq:y_a}.

  We will now rewrite the sum of the terms of \eqref{eq:Cinf_circle_Ainf_terms} that do involve $\hat u_\bullet$ as a sum of terms of the forms \eqref{eq:y_1} and \eqref{eq:y_a}.
  First note that, when $m' = 0$, the $\hat u_\bullet$-terms of \eqref{eq:Cinf_circle_Ainf_terms} are already of the required form; hence we will now assume that $m' > 0$.
  In that case, \cref{lemma:Cinf_even_eq_2} below implies that
  \[ \sum_{k_\bullet} \pm \mu^R_{r-l+1 + m'}(v^1_\bullet)  =  \sum_{k'_\bullet} \pm \mu^R_{r-l+1 + m'}(\hat v^1_\bullet) \]
  where
  \[ \hat v^1_\bullet  \defeq  (\shift x_l, \dots, \shift x_{k'_0}, \shift \alpha, \shift x_{k'_0 + 1}, \dots, \shift x_{k'_{m'-1}}, \shift \alpha, \shift x_{k_{m'-1} + 1}, \dots, \shift x_r) \]
  where $k'_\bullet$ runs over all sequences $k'_0 < \dots < k'_{m'-1}$ such that $j'_i \le k'_i < j'_{i+1}$ (where $j'_0 \defeq l - 1$).
  We observe that
  \begin{align*}
    \pm \mu^R_{n+l-r+m-m'}(\hat u_\bullet, \mu^R_{r-l+1 + m'}(\hat v^1_\bullet), w^{a1}_\bullet) &\tensor 1 \\
    \pm \mu^R_{n+l-r+m-m'-1}(\hat u_\bullet, \mu^R_{r-l+1 + m'}(\hat v^1_\bullet), w^{aa}_\bullet) &\tensor a
  \end{align*}
  are of the forms \eqref{eq:y_1} and \eqref{eq:y_a}, respectively (in particular the signs match, as a diligent check shows).

  We now note that, after replacing the $\hat u_\bullet$-terms of \eqref{eq:Cinf_circle_Ainf_terms} in this way, each term of the forms \eqref{eq:y_1} and \eqref{eq:y_a} appears at most once.
  For a fixed $y^1_\bullet$ or $y^a_\bullet$, corresponding to some sequence $k_\bullet$, the terms of the form \eqref{eq:y_1} or \eqref{eq:y_a} that do not appear in \eqref{eq:Cinf_circle_Ainf_terms} are precisely those such that there both exists an $i$ such that $j_i + i \le p \le k_i + i \le q$ and there does not exist an $i'$ such that $p \le j_{i'} + i' - 1 \le q < k_{i'} + i'$.
  The sum of all of these terms is zero by \cref{lemma:Cinf_even_eq} below.
  Hence the sum of all terms of the forms \eqref{eq:Cinf_circle_Ainf_terms} is zero, and thus \eqref{eq:Cinf_circle_Ainf} holds.
\end{proof}

\begin{lemma} \label{lemma:Cinf_even_eq_2}
  Let $R$ be a $\Cinf$-algebra and $\alpha \in R$ an element of even degree.
  Furthermore let $1 \le j_1 < \dots < j_m \le n$ be integers (with $m$ positive), and $y_i \in \shift R$ for $1 \le i \le n$.
  Then
  \begin{equation} \label{eq:Cinf_even_eq_2}
  \begin{multlined}
    \sum_{k'_\bullet} (-1)^{\epsilon_{k'}} \mu^R_{n + m + 1}(y_1, \dots, y_{k'_0}, \shift \alpha, y_{k'_0 + 1}, \dots, y_{k'_{m-1}}, \shift \alpha, y_{k'_{m-1} + 1}, \dots, y_n) \\
     = - \sum_{k_\bullet} (-1)^{\epsilon_{k}} \mu^R_{n + m + 1}(y_1, \dots, y_{k_1}, \shift \alpha, y_{k_1 + 1}, \dots, y_{k_m}, \shift \alpha, y_{k_m + 1}, \dots, y_n)
  \end{multlined}
  \end{equation}
  where $k'_\bullet$ runs over those sequences $k'_0 < \dots < k'_{m-1}$ such that $j_i \le k'_i < j_{i+1}$, similarly $k_\bullet$ runs over those sequences $k_1 < \dots < k_m$ such that $j_i \le k_i < j_{i+1}$, and the signs are given by
  \[ \epsilon_{k'}  \defeq  \sum_{0 \le i \le m-1} \sum_{j_i < p \le k'_i} \deg{y_p}  \displayand  \epsilon_{k}  \defeq  \sum_{1 \le i \le m-1} \sum_{j_i < p \le k_i} \deg{y_p} + \sum_{1 \le p \le k_m} \deg{y_p} + m - 1 \]
  (where $j_0 \defeq 0$ and $j_{m+1} \defeq n + 1$ throughout).
\end{lemma}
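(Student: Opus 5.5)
The plan is to derive \eqref{eq:Cinf_even_eq_2} from the $(1, N)$-shuffle relations \eqref{eq:Cinf_shuffle} of $R$, where one copy of $\shift \alpha$ is shuffled into a word that already contains $m-1$ copies of $\shift \alpha$. The key input is that $\deg \shift \alpha$ is odd, since $\alpha$ has even degree.

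\emph{Step 1.} Fix a sequence $k_1 < \dots < k_{m-1}$ with $j_i \le k_i < j_{i+1}$; these are the positions shared by both sides. Let
\[ W_{k} \defeq (y_1, \dots, y_{k_1}, \shift \alpha, y_{k_1+1}, \dots, y_{k_{m-1}}, \shift \alpha, y_{k_{m-1}+1}, \dots, y_n), \]
a word of length $N = n + m - 1$. The $(1, N)$-shuffle relation for the pair $(\shift \alpha, W_k)$ says that the signed sum of $\mu^R$ applied to all words obtained by inserting one further $\shift \alpha$ into $W_k$ vanishes. Each insertion contributes the Koszul sign $(-1)^{\text{sum of degrees of the letters passed}}$. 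I sort the insertion positions according to the block $[j_i, j_{i+1})$ of $y$-indices into which the new $\shift \alpha$ falls, for $i = 0, \dots, m$.

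\emph{Step 2.} I will identify the three kinds of contributions. Insertions into block $0$ (before $y_{j_1}$) give exactly the terms of the left-hand side with $k'_0$ the new position and $k'_i = k_i$ for $i \ge 1$. Insertions into block $m$ (after $y_{j_m}$) give exactly the right-hand side terms with $k_m$ the new position. Insertions into a block $i$ with $1 \le i \le m-1$ produce words with two copies of $\shift \alpha$ in the same block.

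Now sum over all $k_\bullet$. Each such ``doubled'' word $V$ then arises exactly twice. In one occurrence, the fixed $\alpha$ of block $i$ is the left one of the two and the new one is inserted to its right. In the other, the fixed $\alpha$ is the right one and the new one is inserted to its left. The two insertion signs differ precisely by the degrees of the letters strictly between the two $\alpha$'s plus $\deg \shift \alpha$ itself, once the identification of which $\alpha$ is ``fixed'' is taken into account. Because $\deg \shift \alpha$ is odd, I expect these two contributions to cancel exactly. This includes the case where the two $\alpha$'s are adjacent, where the relative sign is just $(-1)^{\deg \shift \alpha} = -1$.

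\emph{Step 3.} Summing the shuffle relations over all $k_\bullet$ then leaves $(\text{LHS-type terms}) + (\text{RHS-type terms}) = 0$, which is \eqref{eq:Cinf_even_eq_2} up to signs. The final task is to check that the Koszul insertion signs agree with $(-1)^{\epsilon_{k'}}$ and $-(-1)^{\epsilon_k}$.
\begin{itemize}
  \item Inserting into block $0$ at position $k'_0$ passes $y_1, \dots, y_{k'_0}$. Together with the $\epsilon$-contributions already attached to the fixed $\alpha$'s, this reproduces $\epsilon_{k'}$.
  \item Inserting into block $m$ passes all $y_p$ with $p \le k_m$ and the $m-1$ odd copies of $\shift \alpha$. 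This accounts for the term $\sum_{p \le k_m} \deg y_p + m - 1$ in $\epsilon_k$.
\end{itemize}
The discrepancy in how the blockwise signs of the other $\alpha$'s are normalised (relative to $j_i$ in both cases) should cancel, because it is common to both sides.

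The main obstacle is this sign bookkeeping. Specifically, I need to verify that the doubled terms in Step 2 carry opposite signs rather than equal ones, and that the global sign in Step 3 is indeed $-1$. I would first check the cases $m = 1$ and $m = 2$ with small $n$ by hand to pin down the conventions, and then argue in general by tracking the sign of a single $\alpha$ moving past letters.
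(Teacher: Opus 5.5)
Your proposal is correct and is essentially the paper's proof: for each choice of the $m-1$ shared positions $l_\bullet$, apply the shuffle relation moving one $\shift \alpha$ through the word with $m-1$ fixed copies of $\shift \alpha$, weight it by $(-1)^{\sum_{1 \le i \le m-1} \sum_{j_i < p \le l_i} \deg{y_p}}$, and sum over all $l_\bullet$. Insertions into block $0$ then give the left-hand side, insertions into block $m$ give the right-hand side, and doubled terms cancel in pairs because $\deg{\shift \alpha}$ is odd.

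The sign check you defer goes through exactly. The two occurrences of a doubled term differ in exponent by exactly $1$, and the boundary insertions reproduce $\epsilon_{k'}$ and $\epsilon_k$ on the nose. You should, however, state the weighting explicitly, since without it the doubled terms would not cancel.
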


\begin{proof}
  Consider the $(1,n+m)$-shuffle relation \eqref{eq:Cinf_shuffle} applied to
  \[ \shift \alpha \tensor y_1 \tensor \dots \tensor y_{l_1} \tensor \shift \alpha \tensor y_{l_1 + 1} \tensor \dots \tensor y_{l_m} \tensor \shift \alpha \tensor y_{l_m + 1} \tensor \dots \tensor y_n \]
  where $l_\bullet$ is a sequence $l_1 < \dots < l_{m-1}$ such that $j_i \le l_i < j_{i+1}$.
  Taking the sum of these relations for all $l_\bullet$, each multiplied by $-1$ to the power of
  \[ \epsilon_l  \defeq  \sum_{1 \le i \le m-1} \sum_{j_i < p \le l_i} \deg{y_p} \]
  we note that each term of the two sums of \eqref{eq:Cinf_even_eq_2} appears exactly once, namely for the sequence $l_\bullet$ with $l_i = k'_i$ or $l_i = k_i$, respectively.
  All other terms in the sum appear exactly twice, with opposite sign (using that $\shift \alpha$ has odd degree).
\end{proof}

\begin{lemma} \label{lemma:Cinf_even_eq}
  Let $R$ be a $\Cinf$-algebra and $\alpha \in R$ an element of even degree.
  Furthermore let $1 \le j_1 < \dots < j_m \le n$ be integers ($m$ can be $0$), and $y_i \in \shift R$ for $1 \le i \le n$.
  Then
  \begin{equation} \label{eq:Cinf_even_eq}
    \sum_{k_\bullet} (-1)^{\epsilon_k} \mu^R_{n + m + 1}(y_1, \dots, y_{k_0}, \shift \alpha, y_{k_0 + 1}, \dots, y_{k_m}, \shift \alpha, y_{k_m + 1}, \dots, y_n) = 0
  \end{equation}
  where $k_\bullet$ runs over those sequences $k_0 < \dots < k_m$ such that $j_i \le k_i < j_{i+1}$, and the sign is given by
  \[ \epsilon_k  \defeq  \sum_{0 \le i \le m} \sum_{j_i < p \le k_i} \deg{y_p} \]
  (where $j_0 \defeq 0$ and $j_{m+1} \defeq n + 1$ throughout).
\end{lemma}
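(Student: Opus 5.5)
The plan is to obtain \eqref{eq:Cinf_even_eq} as a signed sum of $(1,n+m)$-shuffle relations, in the same spirit as the proof of \cref{lemma:Cinf_even_eq_2}, where one copy of $\shift \alpha$ is the "moving" element. For every sequence $l_\bullet = (l_1 < \dots < l_m)$ with $j_i \le l_i < j_{i+1}$ for $1 \le i \le m$, I apply the shuffle relation \eqref{eq:Cinf_shuffle} with $p = 1$, $q = n+m$ to
\[ \shift \alpha \tensor y_1 \tensor \dots \tensor y_{l_1} \tensor \shift \alpha \tensor y_{l_1+1} \tensor \dots \tensor y_{l_m} \tensor \shift \alpha \tensor y_{l_m+1} \tensor \dots \tensor y_n. \]
This is a relation among $\mu^R_{n+m+1}$ evaluated on words in which the first $\shift\alpha$ is inserted in every possible slot. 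I multiply it by $(-1)^{\epsilon_l}$ with $\epsilon_l \defeq \sum_{1 \le i \le m} \sum_{j_i < p \le l_i} \deg{y_p}$ and sum over all $l_\bullet$.

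The key step is to sort the resulting terms. Each term is a word with $m+1$ copies of $\shift \alpha$, inserted after positions $c_0 \le c_1 \le \dots \le c_m$ of the $y$'s. Consider the terms in which the $\shift\alpha$'s lie in distinct gaps compatible with the $j$'s, meaning $c_0 < c_1 < \dots < c_m$ with $j_i \le c_i < j_{i+1}$ and $j_0 = 0$. Such a term arises from exactly one $l_\bullet$, namely $l_i = c_i$ for $i \ge 1$. These terms are precisely the summands of \eqref{eq:Cinf_even_eq}.

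Every other term either has two adjacent $\shift \alpha$'s or violates the gap constraints. I will show that each such term arises exactly twice, with opposite signs. A term arises from sequences $l_\bullet$ obtained by deleting one of its $\shift\alpha$'s such that the remaining $m$ copies satisfy the constraints for $l_\bullet$. When the configuration is not of the admissible form, the only deletable $\shift\alpha$'s are the two that "collide": either they share the same interval $[j_i, j_{i+1})$, or they sit in the same slot. Moving the free $\shift \alpha$ past the block of $y$'s between these two positions changes the Koszul sign by the sum of the degrees of those $y$'s. This is compensated by the corresponding change in $\epsilon_l$, up to one extra factor of $-1$ coming from $\shift\alpha$ passing the other $\shift\alpha$, which has odd degree because $\deg\alpha$ is even. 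So the two occurrences cancel. For $m = 0$ the argument reduces directly to the plain $(1,n)$-shuffle relation for $\shift\alpha \tensor y_1 \tensor \dots \tensor y_n$.

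The remaining step, and the main obstacle, is the sign bookkeeping. I must check that the surviving terms carry exactly the sign $(-1)^{\epsilon_k}$, with no global discrepancy, and that in the doubled terms the Koszul sign and the $\epsilon_l$ difference combine to give exactly $-1$. I would first settle the cases $m = 0$ and $m = 1$ by hand to fix conventions. Then I would argue uniformly by comparing each term with the reference ordering in which every $\shift\alpha$ sits immediately after $y_{j_i}$: the sign of any term is $(-1)^{\sum \deg y}$ over the $y$'s jumped, times the parity of the number of $\shift\alpha$-transpositions. Since the right-hand side is $0$, the total sum of the relations vanishes, which yields \eqref{eq:Cinf_even_eq}.
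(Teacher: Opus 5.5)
Your proposal is correct and follows the paper's proof essentially verbatim. You use the same family of $(1,n+m)$-shuffle relations indexed by $l_\bullet$, weighted by the same sign $(-1)^{\epsilon_l}$. Admissible configurations appear exactly once (from $l_i = k_i$), and all others appear exactly twice with opposite signs because $\shift\alpha$ has odd degree. Your explicit sign check, in which the change in $\epsilon_l$ cancels the Koszul sign of jumping the $y$'s and leaves only the $-1$ from passing the other $\shift\alpha$, is more detailed than the paper's one-line justification and checks out.
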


\begin{proof}
  Consider the $(1,n+m)$-shuffle relation \eqref{eq:Cinf_shuffle} applied to
  \[ \shift \alpha \tensor y_1 \tensor \dots \tensor y_{l_1} \tensor \shift \alpha \tensor y_{l_1 + 1} \tensor \dots \tensor y_{l_m} \tensor \shift \alpha \tensor y_{l_m + 1} \tensor \dots \tensor y_n \]
  where $l_\bullet$ is a sequence $l_1 < \dots < l_m$ such that $j_i \le l_i < j_{i+1}$.
  Taking the sum of these relations for all $l_\bullet$, each multiplied by $-1$ to the power of
  \[ \epsilon_l  \defeq  \sum_{1 \le i \le m} \sum_{j_i < p \le l_i} \deg{y_p} \]
  we note that each term of the left-hand side of \eqref{eq:Cinf_even_eq} appears exactly once, namely for the sequence $l_\bullet$ with $l_i = k_i$.
  All other terms in the sum appear exactly twice, with opposite sign (using that $\shift \alpha$ has odd degree).
\end{proof}

\begin{proposition} \label{con:Cinf_circle_functor}
  Let $d$ be an even integer, $\k$ a cdga, $f \colon R \to S$ a map of $\Cinf$-algebras over $\k$, and $\alpha \in R^d$ a cocycle.
  Then the following construction defines a map $f_\alpha \colon R_\alpha \to S_{f(\alpha)}$ of the $\Cinf$-algebras over $\k$ of \cref{con:Cinf_circle}.
  
  For all $1 \le i \le n$, let $x_i \in R$ and $b_i \in \{1, a\}$; furthermore let $j_1 < \dots < j_m$ be those indices $i$ such that $b_i = a$.
  Then we define
  \begin{multline*}
      (f_\alpha)_n \bigl( \shift (x_1 \tensor b_1), \dots, \shift (x_n \tensor b_n) \bigr) \\
      \defeq  \sum_{k_\bullet} (-1)^{\epsilon_k} f_{n + m}(\shift x_1, \dots, \shift x_{k_1}, \shift \alpha, \shift x_{k_1 + 1}, \dots, \shift x_{k_m}, \shift \alpha, \shift x_{k_m + 1}, \dots, \shift x_n) \tensor 1 \\
      + \sum_{l_\bullet} (-1)^{\epsilon_l} f_{n + m - 1}(\shift x_1, \dots, \shift x_{l_1}, \shift \alpha, \shift x_{l_1 + 1}, \dots, \shift x_{l_{m-1}}, \shift \alpha, \shift x_{l_{m-1} + 1}, \dots, \shift x_n) \tensor a
  \end{multline*}
  where $k_\bullet$ runs over those sequences $k_1 < \dots < k_m$ such that $j_i \le k_i < j_{i+1}$, and $l_\bullet$ runs over those sequences $l_1 < \dots < l_{m-1}$ such that $j_i \le l_i < j_{i+1}$; the signs are given by
  \[ \epsilon_k  \defeq  \sum_{1 \le i \le m} \sum_{j_i < p \le k_i} \deg{\shift x_p}  \qquad \text{and} \qquad  \epsilon_l  \defeq  \sum_{1 \le i \le m - 1} \sum_{j_i < p \le l_i} \deg{\shift x_p} + \sum_{j_m < p \le n} \deg{\shift x_p} \]
  (we set $j_{m+1} \defeq n + 1$ throughout).
  When $m = 0$, by convention there is no sequence $l_\bullet$ and only the empty sequence $k_\bullet$; hence we have
  \[ (f_\alpha)_n \bigl( \shift (x_1 \tensor 1), \dots, \shift (x_n \tensor 1) \bigr)  =  f_n(\shift x_1, \dots, \shift x_n) \tensor 1 \]
  and in particular $f_\alpha \after \iota_R = \iota_S \after f$, where $\iota_R \colon R \to R_\alpha$ and $\iota_S \colon S \to S_{f(\alpha)}$ are the strict maps of $\Cinf$-algebras given by $\shift x \mapsto \shift (x \tensor 1)$.
\end{proposition}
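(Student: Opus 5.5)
We need to check three things: $\k$-linearity, the shuffle relations \eqref{eq:Cinf_map_shuffle} for $f_\alpha$, and the $\Ainf$-morphism relations \eqref{eq:Ainf_map} between $R_\alpha$ and $S_{f(\alpha)}$. Throughout, $f(\alpha)$ denotes the cocycle $f_1(\shift\alpha)$ up to the shift; it is a cocycle because $f_1$ is a chain map. The definition of $(f_\alpha)_n$ has exactly the same shape as the $\mu_n$ of \cref{con:Cinf_circle}, with $\mu^R$ replaced by $f$. So the plan is to reuse the combinatorics of \cref{lemma:Cinf_circle_shuffle,lemma:Cinf_circle} essentially verbatim.

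\textbf{Step 1: $\k$-linearity and the shuffle relations.} $\k$-multilinearity is immediate from that of the $f_n$. For the shuffle relations, we copy the proof of \cref{lemma:Cinf_circle_shuffle} word for word, with $\mu^R$ replaced by $f$. The argument there only used three things: the shuffle relation of the underlying maps, the free involution on the non-admissible shuffles, and the oddness of $\shift\alpha$. All three hold for $f$, since $f$ satisfies \eqref{eq:Cinf_map_shuffle}. Both the $R\tensor 1$ component and the $R\tensor a$ component then vanish.

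\textbf{Step 2: setting up the $\Ainf$-morphism relation.} Fix inputs $\shift(x_i\tensor b_i)$ and expand both sides of \eqref{eq:Ainf_map} componentwise in $S\tensor 1$ and $S\tensor a$. If all $b_i=1$, this is just the relation for $f$. Otherwise, on the right-hand side, $f_\alpha\circ(\id\otimes\mu\otimes\id)$ expands exactly as in \eqref{eq:Cinf_circle_Ainf_terms}: there are terms in which every $\shift\alpha$ stays outside the inner operation, and $\hat u_\bullet$-terms in which an $\shift\alpha$ attached to some $a$ to the left of the inner block is placed inside it.

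On the left-hand side, $\mu^{S_{f(\alpha)}}_i\circ(f_\alpha\otimes\dots\otimes f_\alpha)$ produces terms in which:
\begin{itemize}
  \item the $\shift\alpha$'s are inserted inside the blocks $f_{n_j}$;
  \item an inner $f_{n_j}(\dots)\tensor a$ output forces, via the $\mu^S$-formula, an extra $\shift f_1(\alpha)$ into the outer $\mu^S$.
\end{itemize}
The key observation is that $f_1(\shift\alpha)=f_1(\shift\alpha)$ is itself the arity-one image of $\shift\alpha$. So these left-hand side terms are precisely the terms
\[ \mu^S\bigl(f_{\bullet}(\dots),\dots,f_1(\shift\alpha),\dots\bigr) \]
of the $\Ainf$-morphism relation for $f$, evaluated on the sequences $y^1_\bullet$ and $y^a_\bullet$ (defined as in the proof of \cref{lemma:Cinf_circle}), in which each inserted $\shift\alpha$ occupies either its own singleton block or a block together with neighbouring $\shift x$'s.

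\textbf{Step 3: matching the two sides, and the expected obstacle.} The hard part is to show that the terms on both sides that do not appear in the plain $f$-relation cancel. On the right-hand side, we first handle the $\hat u_\bullet$-terms by using \cref{lemma:Cinf_even_eq_2} to move the jumped $\shift\alpha$ to the canonical position. The remaining missing terms are the analogues of those in the proof of \cref{lemma:Cinf_circle}; they should vanish by \cref{lemma:Cinf_even_eq} applied to $\mu^R$. On the left-hand side, the missing block decompositions are those where a block of $f$ would straddle an $\shift\alpha$ in a non-allowed position. These should vanish by the analogue of \cref{lemma:Cinf_even_eq} for $f$, which is proved the same way from the $(1,N)$-shuffle relation for $f$.

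I expect the sign bookkeeping and the precise identification of which terms are missing to be the main obstacle. I would first check arities $n=1,2$ with one $a$, where the relation is $f_\alpha$ commuting with $\mu_1$, to calibrate the signs $\epsilon_k$ and $\epsilon_l$. Finally, the identity $f_\alpha\circ\iota_R=\iota_S\circ f$ is immediate from the $m=0$ case together with the composition formula.
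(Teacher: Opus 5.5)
Your overall route is the paper's: $\k$-multilinearity, the shuffle relations by rerunning the proof of \cref{lemma:Cinf_circle_shuffle} with $f$ in place of $\mu^R$, and the $\Ainf$-morphism relation by mimicking \cref{lemma:Cinf_circle}. The paper leaves this last part to the reader. Step 1 is fine, and so is the identity $f_\alpha \after \iota_R = \iota_S \after f$.

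However, your description of the left-hand side in Steps 2--3 is wrong, and the argument fails as you wrote it. The outer $\mu^{S_{f(\alpha)}}$ can insert the $f_1(\shift\alpha)$ belonging to a $\tensor a$-output block \emph{after} a later block that contains $a$-inputs but outputs $\tensor 1$. Concretely, take $n=2$ and $b_1=b_2=a$. Feed into $\mu_2^{S_{f(\alpha)}}$ the $\tensor a$-part $f_1(\shift x_1)\tensor a$ of $(f_\alpha)_1(\shift(x_1\tensor a))$ and the $\tensor 1$-part $f_2(\shift x_2,\shift\alpha)\tensor 1$ of $(f_\alpha)_1(\shift(x_2\tensor a))$. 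This produces the summand $\pm\mu^S_3\bigl(f_1(\shift x_1), f_2(\shift x_2,\shift\alpha), f_1(\shift\alpha)\bigr)\tensor 1$. Both copies of $\shift\alpha$ sit after $x_2$, so this is not a term of the $f$-relation on the only admissible sequence $(\shift x_1,\shift\alpha,\shift x_2,\shift\alpha)$. Hence your claim that the left-hand terms are ``precisely'' such terms is false.

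Conversely, two decompositions of that sequence are missing from the left-hand side, and only one behaves as you expect. The decomposition $[\shift x_1][\shift\alpha,\shift x_2,\shift\alpha]$ does vanish by the $f$-analogue of \cref{lemma:Cinf_even_eq}, since $f_3(\shift\alpha,\shift x_2,\shift\alpha)=0$. But $[\shift x_1][\shift\alpha,\shift x_2][\shift\alpha]$ need not vanish on its own.

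The missing idea is that the left-hand side has its own ``jumping'' terms, analogous to the $\hat u_\bullet$-terms. They must be treated symmetrically to the right-hand side. Rewrite each jumped-over block using the $f$-analogue of \cref{lemma:Cinf_even_eq_2}; here this is just $f_2(\shift x_2,\shift\alpha)=\pm f_2(\shift\alpha,\shift x_2)$ by the $(1,1)$-shuffle relation for $f$. This turns the jumped terms into the missing decompositions in which an $f$-block contains both a borrowed $\shift\alpha$ (whose $a$-input lies to its left) and an $a$-input whose $\shift\alpha$ lies to its right, such as $[\shift x_1][\shift\alpha,\shift x_2][\shift\alpha]$. Only the remaining missing terms are then killed by the $f$-analogue of \cref{lemma:Cinf_even_eq}. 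This is exactly the two-lemma treatment you already apply to the right-hand side, and with this correction your outline becomes the argument the paper intends.
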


\begin{proof}
  First observe that the maps $(f_\alpha)_n$ are indeed $\k$-multilinear.
  That each map $(f_\alpha)_n$ fulfills the shuffle relations \eqref{eq:Cinf_map_shuffle} follows by verbatim the same proof as for \cref{lemma:Cinf_circle_shuffle}.
  The proof that the maps $(f_\alpha)_n$ assemble into a map of $\Ainf$-algebras is similar to the proof of \cref{lemma:Cinf_circle}, and we leave it to the reader.
\end{proof}

\begin{lemma} \label{lemma:Cinf_circle_functor_qiso}
  Let $d$ be an even integer, $f \colon R \to S$ a quasi-isomorphism of $\Cinf$-algebras, and $\alpha \in R^d$ a cocycle.
  Then the map $f_\alpha \colon R_\alpha \to S_{f(\alpha)}$ of \cref{con:Cinf_circle_functor} is again a quasi-isomorphism.
\end{lemma}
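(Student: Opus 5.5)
We need to show that $(f_\alpha)_1 \colon \shift R_\alpha \to \shift S_{f(\alpha)}$ is a quasi-isomorphism. The plan is to filter both sides by the ``$a$-degree'' and compare associated graded pieces.

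As a graded $\k$-module, $R_\alpha = R \tensor 1 \oplus R \tensor a$. By the arity-$1$ formula in the Observation following \cref{con:Cinf_circle}, the differential is
\[ \mu_1\bigl(\shift(y \tensor a)\bigr) = \mu^R_2(\shift y, \shift\alpha) \tensor 1 + \mu^R_1(\shift y) \tensor a , \qquad \mu_1\bigl(\shift(x\tensor 1)\bigr) = \mu^R_1(\shift x)\tensor 1 . \]
Hence $R \tensor 1$ is a subcomplex of $R_\alpha$, isomorphic to $R$, and the quotient $R_\alpha / (R\tensor 1)$ is isomorphic to $R$ shifted by $d-1$, up to the sign conventions. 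The same holds for $S_{f(\alpha)}$, so both sides are mapping cones of the chain map ``multiplication by $\alpha$'' given by $\mu_2(\blank, \shift\alpha)$.

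Next we compute $(f_\alpha)_1$ from the formula in \cref{con:Cinf_circle_functor} with $n=1$:
\[ (f_\alpha)_1\bigl(\shift(x\tensor 1)\bigr) = f_1(\shift x)\tensor 1, \qquad (f_\alpha)_1\bigl(\shift(y\tensor a)\bigr) = f_2(\shift y,\shift\alpha)\tensor 1 + \pm f_1(\shift y)\tensor a . \]
So $(f_\alpha)_1$ preserves the two-step filtration $R\tensor 1 \subseteq R_\alpha$, which gives a map of short exact sequences of complexes
\[ 0 \to \shift R \to \shift R_\alpha \to \shift R\tensor a \to 0 \quad\longto\quad 0 \to \shift S \to \shift S_{f(\alpha)} \to \shift S\tensor a \to 0 . \]
On the sub- and quotient terms this map is $f_1$, in the quotient case up to a sign, which is harmless. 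Here $(f_\alpha)_1$ is a chain map because $f_\alpha$ is an $\Ainf$-map by \cref{con:Cinf_circle_functor}; the sub and quotient complexes are preserved, and the induced maps are chain maps automatically. Since $f_1$ is a quasi-isomorphism, the two long exact sequences in cohomology together with the five lemma show that $(f_\alpha)_1$ is a quasi-isomorphism.

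The main point needing care is the identification of the induced map on the quotient: it must be $\pm f_1$ with a sign uniform in degree, or at least degreewise $\pm$, which still induces an isomorphism. This is read off from $\epsilon_l$ with $m=1$, $n=1$, where the sum over $j_1<p\le 1$ is empty, so the sign is $+$. Everything else is formal.
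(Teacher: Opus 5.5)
Your proposal is correct and follows essentially the same route as the paper. Both arguments use the map of short exact sequences $0 \to R \to R_\alpha \to R_\alpha/(R \otimes 1) \to 0$ into its analogue for $S_{f(\alpha)}$: there $(f_\alpha)_1$ restricts to $f_1$ on the subcomplex and induces $f_1 \otimes a$ on the quotient, and the five lemma finishes the proof. Your explicit check via the $n=1$ formula that the quotient map carries no sign (because $\epsilon_l$ is an empty sum) is a detail the paper leaves implicit.
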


\begin{proof}
  Consider the commutative diagram of cochain complexes
  \[
  \begin{tikzcd}
    0 \rar & R \rar{\iota_R} \dar[swap]{f_1} & R_\alpha \rar \dar{(f_\alpha)_1} & R_\alpha / (R \tensor 1) \rar \dar & 0 \\
    0 \rar & S \rar{\iota_S} & S_{f(\alpha)} \rar & S_\alpha / (S \tensor 1) \rar & 0
  \end{tikzcd}
  \]
  with short exact rows.
  The map $f_1$ is a quasi-isomorphism by assumption.
  The induced maps on quotients is isomorphic to the map $f_1 \tensor a \colon R \tensor a \to S \tensor a$, which is also a quasi-isomorphism.
  Hence $(f_\alpha)_1$ is a quasi-isomorphism.
\end{proof}

\subsection*{Odd spherical fibrations}

We will now prove that principal $\K(\QQ, n)$-fibrations can, when $n$ is odd, be modeled by the construction of \cref{con:Cinf_circle}.
Note that an odd-dimensional sphere $\Sphere n$ is rationally equivalent to $\K(\QQ, n)$, and that in this case the classifying space $\B\aut^+(\Sphere n)$ of the topological monoid of orientation-preserving self--homotopy equivalences of $\Sphere n$ is rationally equivalent to $\K(\QQ, n + 1)$.
Hence any oriented $\Sphere n$-fibration is rationally a principal $\K(\QQ, n)$-fibration.

\begin{theorem} \label{thm:Cinf_circle}
  Let $d \ge 2$ be an even integer, $f \colon X \to B$ a map of nilpotent animas of finite rational type, $\xi$ an element of $\Coho d (X; \QQ)$, and $g \colon X_\xi \to X$ the principal $\K(\QQ, d-1)$-fibration classified by $\xi$.
  Furthermore let $\k$ be a cofibrant cdga that models $B$, let $S$ be a unital $\Cinf$-algebra over $\k$ that models $X$ over $B$, and let $\alpha \in S^d$ be a cocycle that represents $\xi$.
  Then the map $g \colon X_\xi \to X$ of animas over $B$ is modeled by the strict unital map $\iota_S \colon S \to S_\alpha$ of $\Cinf$-algebras over $\k$ constructed in \cref{con:Cinf_circle}.
\end{theorem}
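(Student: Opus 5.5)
We reduce to the case of an honest cdga, where $R_\alpha$ is the classical Sullivan-type model, and then transport along the quasi-isomorphism $S \to R$ using the functoriality of \cref{con:Cinf_circle_functor} together with \cref{lemma:Cinf_circle_functor_qiso}.

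By the definition of $S$ being a model, there is a cofibration of cdgas $\k \to R$ modeling $f$ and a unital quasi-isomorphism $\phi \colon S \to R$ of $\Cinf$-algebras over $\k$. The cocycle $\phi_1(\alpha) \in R^d$ represents $\xi$. On the cdga side we use the classical model: $g$ is modeled by the relative Sullivan extension $R \to R \tensor \SA(a)$ with $\deg a = d-1$ odd and $da = \phi_1(\alpha)$ (\cite[§15(b), Example~4]{FHT}). Since $a$ is odd, we have $\SA(a) = \QQ\langle 1, a\rangle$ as a graded vector space.

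The first concrete step is to check that, when $R$ is regarded as a $\Cinf$-algebra via \cref{obs:cdga_as_Cinf}, the $\Cinf$-algebra $R_{\phi_1(\alpha)}$ of \cref{con:Cinf_circle} is exactly this cdga. Since $\mu^R_n = 0$ for $n \ge 3$, only arities $\le 2$ of $R_\alpha$ survive, and the formulas of the Observation after \cref{con:Cinf_circle} reduce as follows:
\begin{itemize}
  \item $\mu_1(\shift(y \tensor a)) = \mu^R_2(\shift y, \shift \alpha)\tensor 1 + \mu^R_1(\shift y)\tensor a$, which is the differential $d(ya) = dy\cdot a \pm y\alpha$;
  \item $\mu_2$ reduces to the product with $a^2 = 0$;
  \item $\mu_n = 0$ for $n \ge 3$, since each term involves some $\mu^R_{\ge 3}$.
\end{itemize}
This is a sign check, and the map $\iota_R$ is the strict cdga inclusion $R \to R \tensor \SA(a)$. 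It is a cofibration under $\k$, so it models $g$ over $B$.

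Next we apply \cref{con:Cinf_circle_functor} to $\phi$, which gives a $\Cinf$-map $\phi_\alpha \colon S_\alpha \to R_{\phi_1(\alpha)}$ over $\k$ with $\phi_\alpha \after \iota_S = \iota_R \after \phi$. By \cref{lemma:Cinf_circle_functor_qiso}, $\phi_\alpha$ is a quasi-isomorphism. The structure maps $(\phi_\alpha)_n$ vanish whenever an input is $\shift(1\tensor 1)$, because $\phi$ is unital and the inserted $\shift\alpha$'s sit among the entries. They also send $\shift(1 \tensor 1)$ to $\shift(1\tensor1)$, so $\phi_\alpha$ is unital. Likewise $S_\alpha$ is unital with unit $1 \tensor 1$: this follows from unitality of $S$, since any term with a unit input in arity $\ge 3$ after inserting $\alpha$'s vanishes, and the arity $2$ terms with a unit give exactly $\shift(x \tensor b)$. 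We must also handle the possibility that $\phi_1(\alpha)$ is not literally the representative used to build the cdga model, but it is by choice.

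Putting these together, the commutative square $\phi_\alpha \after \iota_S = \iota_R \after \phi$, with both horizontal maps unital quasi-isomorphisms, exhibits $\iota_S$ as a model for $\iota_R$, hence for $g$ over $B$. We need the notion of a map of $\Cinf$-algebras modeling a map of animas over $B$ to be invariant under such squares, which we take from the conventions in \cite[§3]{II}.

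The main obstacle is the first step: confirming the signs so that $R_{\phi_1(\alpha)}$ agrees with the Sullivan extension on the nose rather than up to an automorphism such as $a \mapsto -a$. If a sign discrepancy appears, we compose with that automorphism, which changes nothing homotopically. A secondary issue is that the modeling statement requires the Sullivan extension to be built over the chosen cofibrant model $R$ rather than a minimal model; we handle this by noting that the FHT construction works for any cofibrant cdga $R$ modeling $X$.
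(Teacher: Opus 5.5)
Your proposal is correct and is essentially the paper's own proof. You choose the unital quasi-isomorphism $\phi \colon S \to R$, recognize $\iota_R \colon R \to R_{\phi(\alpha)}$ as the classical extension $R \tensor \SA(a)$, and transport along the square of \cref{con:Cinf_circle_functor}, with $\phi_\alpha$ a quasi-isomorphism by \cref{lemma:Cinf_circle_functor_qiso}. The sign discrepancy you anticipate does occur: under the conventions of \cref{obs:dga_as_Ainf}, $\mu_1(\shift(1 \tensor a)) = \shift \phi(\alpha) \tensor 1$ forces $d(a) = -\phi(\alpha)$, and your fix $a \mapsto -a$ handles it.

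One point needs tightening. The cited FHT example treats only simply connected $X$. For merely nilpotent $X$ you should argue as the paper does, by pulling back the universal case over $\K(\QQ, d)$ (simply connected since $d \ge 2$), rather than appealing to FHT for an arbitrary cofibrant $R$.
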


\begin{proof}
  By assumption there exists a cofibration of cdgas $\k \to R$ that models $f$, and a unital quasi-isomorphism $\phi \colon S \to R$ of $\Cinf$-algebras.
  It is well known that the map $\iota_R \colon R \to R_{\phi(\alpha)}$ is a cofibration of cdgas that models $g$: when $X$ is simply connected (e.g.\ in the universal case $X = \K(\QQ, d)$), this is \cite[§15(b), Example~4]{FHT}; the more general case of $X$ being merely nilpotent follows by pulling back from $\K(\QQ, d)$ (e.g.\ using \cite[Lemma~2.4.7]{I}).
  Furthermore note that \cref{con:Cinf_circle_functor} yields a commutative square
  \[
  \begin{tikzcd}
    S \rar{\iota_S} \dar[swap]{\phi} & S_\alpha \dar{\phi_\alpha} \\
    R \rar{\iota_R} & R_{\phi(\alpha)}
  \end{tikzcd}
  \]
  and that $\phi_\alpha$ is a quasi-isomorphism by \cref{lemma:Cinf_circle_functor_qiso}.
  This implies the claim.
\end{proof}

\begin{remark} \label{rem:Cinf_circle}
  We now sketch an alternative way of proving \cref{thm:Cinf_circle}, and of coming up with the formulas of \cref{con:Cinf_circle} without having to guess.
  A unital $\Cinf$-algebra structure on $R = \QQ \langle 1 \rangle \oplus \wt R$ corresponds to a differential on the quasi-free graded Lie algebra $L \defeq \LL(\shift[-1] \dual {\wt R})$ (see e.g.\ \cite[Proposition~1.41]{BS23}), and $\alpha \in R^d$ to a map $\beta$ from $L$ to the abelian dg Lie algebra $\QQ \langle b \rangle$ with $b$ in degree $d-1$.
  In the situation of \cref{thm:Cinf_circle}, it is a model for the map $\xi \colon X \to \K(\QQ, d)$.

  Writing $V \defeq \shift[-1] \dual {\wt R}$, we can then consider the inclusion of dg Lie algebras $L \to \LL(V, u, b)$ with $\deg u \defeq d-2$, $\deg b \defeq d-1$, and differential determined by $d(b) \defeq u$ and $d(u) \defeq 0$.
  The map $\beta$ extends to a map $\beta' \colon \LL(V, u, b) \to \QQ \langle b \rangle$ by specifying $\beta'(u) \defeq 0$ and $\beta'(b) \defeq b$.
  It is still a model for $\xi$ since the inclusion $L \to \LL(V, u, b)$ is a quasi-isomorphism.
  Since $\beta'$ is surjective, the principal $\K(\QQ, d-1)$-fibration classified by $\xi$ is modeled by the pullback of $0 \to \QQ \langle b \rangle$ along $\beta'$, i.e.\ the kernel of $\beta'$.
  This kernel is generated by the elements $v - \beta(v) b$ for $v \in V$, the elements $[v, b]$ for $v \in V$, and the elements $u$, $[u, u]$, and $[u, b]$.
  We have $d([u, b]) = [u, u]$, and hence these two elements span an acyclic ideal.
  Quotienting it out, and rewriting the differential in terms of the remaining elements, we obtain a unital $\Cinf$-algebra structure on $R \tensor \QQ \langle 1, a \rangle$ (with $[v, b]$ corresponding to $\dual v \tensor a$, and $u$ corresponding to $1 \tensor a$).
  In the case that $\alpha = 0$, this recovers a special case of a result of Lupton--Smith \cite[Theorem~2.1]{LS07}.
\end{remark}

We conclude this appendix with the following alternative proof of (a reformulation of) \cref{prop:transfer_circle_composite}.

\begin{proposition} \label{prop:transfer_circle_composite_alt}
  In the situation of \cref{prop:transfer_circle_composite}, we have that the Becker--Gottlieb transfer $(f \after g)^! \colon \SS[B] \to \SS[X_\xi]$ is modeled by the map of cochain complexes $S_\alpha \to \k$ given by
  \[ x \tensor 1  \longmapsto  0  \displayand  x \tensor a  \longmapsto  - \tr^{\shift S}_\k \bigl( \mu^S_3(\shift x, \blank, \shift \alpha) \bigr) \]
  for $x \in S$ (using notation as in \cref{con:Cinf_circle}).
\end{proposition}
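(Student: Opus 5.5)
We derive this directly from the rational model of the Becker--Gottlieb transfer in \cite{II} (\IIBGtransfer{}). By \cref{thm:Cinf_circle} with $d = 2$, the map $f \after g \colon X_\xi \to B$ is modeled over $\k$ by the unital $\Cinf$-algebra $S_\alpha$ of \cref{con:Cinf_circle}. As a $\k$-module, $S_\alpha \iso S \oplus S \cdot a$ is cofibrant and dualizable because $S$ is. The fiber of $f \after g$ is simply connected, as noted in the proof of \cref{prop:transfer_circle_composite}. We may therefore apply \IIBGtransfer{}: $(f\after g)^!$ is modeled by
\[ \shift y \longmapsto \tr^{\shift S_\alpha}_\k \bigl( \mu^{S_\alpha}_2(\shift y, \blank) \bigr), \]
up to the sign conventions fixed there. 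It remains to compute this trace for $y = x \tensor 1$ and $y = x \tensor a$.

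We compute the trace in block form with respect to the decomposition $\shift S_\alpha = \shift S \tensor 1 \oplus \shift S \tensor a$. Only the diagonal blocks contribute, with a Koszul sign $-1$ on the $a$-block, since $a$ has odd degree.

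For $y = x \tensor 1$, the explicit arity-2 formulas in the observation after \cref{con:Cinf_circle} give two diagonal blocks. The $1 \to 1$ block is $\mu^S_2(\shift x, \blank)$. The $a \to a$ block is $\mu^S_2(\shift x, \blank)$ again, since the $\mu^S_3$-term lands in the $1$-summand. The two contributions cancel because of the relative sign from the odd generator $a$, so the trace is $0$.

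For $y = x \tensor a$, the $1 \to 1$ block is $\shift z \mapsto \mu^S_3(\shift x, \shift \alpha, \shift z) \pm \mu^S_3(\shift x, \shift z, \shift \alpha)$. The $a \to a$ block is $\shift z \mapsto \mu^S_3(\shift x, \shift \alpha, \shift z)$, coming from the last arity-2 formula. The terms $\mu^S_3(\shift x, \shift \alpha, \blank)$ then cancel between the two blocks by the same relative sign. What remains is $\pm \tr^{\shift S}_\k\bigl(\mu^S_3(\shift x, \blank, \shift \alpha)\bigr)$.

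Alternatively, one can rewrite $\mu_3^S(\shift x, \shift \alpha, \blank)$ via the $(1,2)$-shuffle relation, as in the proof of \cref{prop:transfer_circle_composite}, and absorb it into the formula. Either way the result has the claimed shape.

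The main obstacle is sign bookkeeping. We must track the Koszul sign of the supertrace on the odd summand, the convention relating $\mu^S_3(\shift x, \blank, \shift\alpha)(\shift z)$ to $\mu^S_3(\shift x, \shift z, \shift\alpha)$, and the signs $\epsilon_k, \epsilon_l$ of \cref{con:Cinf_circle}. Together these must produce exactly the overall $-1$ and the exact cancellation of the $\mu^S_3(\shift x, \shift \alpha, \blank)$ contributions. I would check this first in the simplest case where $x$ and the test element $z$ have even degree, then argue that the general signs follow by the Koszul rule.

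Finally, we compare with \cref{prop:transfer_circle_composite}. The map $x \tensor 1 \mapsto 0$, $x \tensor a \mapsto \pm\shift x$ models the Gysin homomorphism. This reformulation is therefore consistent with the earlier factorization, which serves as a check on the overall sign.
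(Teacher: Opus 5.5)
Your proposal is the paper's proof. You model $X_\xi$ over $B$ by $S_\alpha$ via \cref{thm:Cinf_circle}, apply \cite[\IIBGtransfer]{II}, and compute the supertrace blockwise from the explicit arity-$2$ formulas, with the $a$-block entering with a minus sign so that the $\mu^S_2(\shift x, \blank)$ terms (for $x \tensor 1$) and the $\mu^S_3(\shift x, \shift \alpha, \blank)$ terms (for $x \tensor a$) cancel. The signs you leave open are already fixed by the formulas you cite: the coefficient $(-1)^{\deg{\shift y}}$ in $\mu^{S_\alpha}_2(\shift (x \tensor a), \shift (y \tensor 1))$ is exactly the sign in $\mu^S_3(\shift x, \blank, \shift \alpha)(\shift y) = (-1)^{\deg{\shift y}} \mu^S_3(\shift x, \shift y, \shift \alpha)$, so the supertrace is $+\tr^{\shift S}_\k(\mu^S_3(\shift x, \blank, \shift \alpha))$, and the overall minus is the one carried by the transfer formula in \cite[\IIBGtransfer]{II}.
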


\begin{proof}
  By \cref{thm:Cinf_circle}, the $\Cinf$-algebra $R_\alpha$ over $\k$ is a model for $X_\xi$ over $B$.
  Note that the fiber of $f \after g$ is simply connected since the restriction of $\xi$ in $\Coho 2(F; \ZZ)$ is non-trivial.
  Thus, by \cite[\IIBGtransfer]{II}, the Becker--Gottlieb transfer $(f \after g)^!$ is modeled by the map of cochain complexes $S_\alpha \to \k$ given by
  \[ x \tensor b  \longmapsto  - \tr^{\shift S_\alpha}_\k \bigl( \mu^{S_\alpha}_2(\shift (x \tensor b), \blank) \bigr) \]
  for $b \in \{ 1, a \}$.
  Since
  \begin{align*}
    \mu^{S_\alpha}_2 \bigl( \shift (x \tensor 1), \shift (y \tensor 1) \bigr) &= \mu^S_2(\shift x, \shift y) \tensor 1 \\
    \mu^{S_\alpha}_2 \bigl( \shift (x \tensor 1), \shift (y \tensor a) \bigr) &= \mu^S_2(\shift x, \shift y, \shift \alpha) \tensor 1 + \mu^S_2(\shift x, \shift y) \tensor a
  \end{align*}
  we have
  \[ \tr^{\shift S_\alpha}_\k \bigl( \mu^{S_\alpha}_2(\shift (x \tensor 1), \blank) \bigr)  =  \tr^{\shift S}_\k \bigl( \mu^S_2(\shift x, \blank) \bigr) - \tr^{\shift S}_\k \bigl( \mu^S_2(\shift x, \blank) \bigr) = 0 \]
  (since $a$ has odd degree).
  Similarly, we have
  \begin{align*}
    \mu^{S_\alpha}_2 \bigl( \shift (x \tensor a), \shift (y \tensor 1) \bigr) &= \mu^S_3(\shift x, \shift \alpha, \shift y) \tensor 1 + (-1)^{\deg {\shift y}} \mu^S_3(\shift x, \shift y, \shift \alpha) \tensor 1 + (-1)^{\deg {\shift y}} \mu^S_2(\shift x, \shift y) \tensor a \\
    \mu^{S_\alpha}_2 \bigl( \shift (x \tensor a), \shift (y \tensor a) \bigr) &= \mu^S_4(\shift x, \shift \alpha, \shift y, \shift \alpha) \tensor 1 + \mu^S_3(\shift x, \shift \alpha, \shift y) \tensor a
  \end{align*}
  and thus
  \[ \tr^{\shift S_\alpha}_\k \bigl( \mu^{S_\alpha}_2(\shift (x \tensor a), \blank) \bigr)  =  \tr^{\shift S}_\k \bigl( \mu^S_3(\shift x, \blank, \shift \alpha) \bigr) \]
  which completes the proof.
\end{proof}

%% file: Tseries.bib
@online{I,
  author = {Naef, Florian and Stoll, Robin},
  title = {A rational model for the fiberwise THH transfer I: Sullivan algebras},
  note = {Preprint},
  year = {2026},
  eprinttype = {arxiv},
  eprint = {2604.02516v2}
}

@online{II,
  author = {Naef, Florian and Stoll, Robin},
  title = {A rational model for the fiberwise THH transfer II: $A_\infty$-algebras},
  note = {Preprint},
  year = {2026},
  eprinttype = {arxiv},
  eprint = {2604.24709v1}
}


%% file: references.bib
@article{AK,
  author = {Armenta, Marco Antonio  and  Keller, Bernhard},
  title = {Derived invariance of the Tamarkin--Tsygan calculus of an algebra},
  volume = {357},
  DOI = {10.1016/j.crma.2019.01.007},
  number = {3},
  journal = {Comptes Rendus. Mathématique},
  year = {2019},
  pages = {236--240}
}

@article{Bar17,
  title = {Spectral Mackey functors and equivariant algebraic K-theory (I)},
  DOI = {10.1016/j.aim.2016.08.043},
  journal = {Advances in Mathematics},
  author = {Barwick, Clark},
  year = {2017},
  pages = {646--727}
}

@article{BG76,
  author = {Becker, J. C. and Gottlieb, D. H.},
  title = {Transfer maps for fibrations and duality},
  journal = {Compositio Mathematica},
  pages = {107--133},
  volume = {33},
  number = {2},
  year = {1976},
  url = {https://www.numdam.org/item/CM_1976__33_2_107_0/}
}

@article{BG91,
  author = {Becker, James C.  and  Gottlieb, Daniel Henry},
  title = {Vector fields and transfers},
  journal = {Manuscripta Mathematica},
  volume = {72},
  number = {2},
  year = {1991},
  pages = {111--130},
  DOI = {10.1007/bf02568269}
}

@unpublished{Ber,
  author = {Berglund, Alexander},
  title = {Poincaré duality fibrations and graph complexes},
  note = {In progress}
}

@incollection {BS23,
  author = "Berglund, Alexander and Stoll, Robin",
  title = "Higher Structures in Rational Homotopy Theory",
  editor = "Vallette, Bruno",
  booktitle = "Higher Structures and Operadic Calculus",
  publisher = "Birkhäuser",
  series = "Advanced Courses in Mathematics -- CRM Barcelona",
  year = "2025",
  pages = "1--58",
  DOI = "10.1007/978-3-031-77779-0_1"
}

@article{BG,
  author = {Bousfield, A. K. and Gugenheim, V. K. A. M.},
  title = {On PL De Rham theory and rational homotopy type},
  volume = {8},
  number = {179},
  journal = {Memoirs of the American Mathematical Society},
  year = {1976}
}

@article{CCRY,
  author = {Carmeli, Shachar and Cnossen, Bastiaan and Ramzi, Maxime and Yanovski, Lior},
  title = {Characters and transfer maps via categorified traces},
  journal = {Forum of Mathematics Sigma},
  volume = {13},
  eid = {e93},
  DOI = {10.1017/fms.2025.23},
  year = {2025}
}

@article{Cla,
  author = {Clapp, Mónica},
  title = {Duality and transfer for parametrized spectra},
  volume = {37},
  DOI = {10.1007/bf01234383},
  number = {1},
  journal = {Archiv der Mathematik},
  year = {1981},
  pages = {462--472}
}

@book{FHT,
  author = {Félix, Yves and Halperin, Stephen and Thomas, Jean-Claude},
  title = {Rational Homotopy Theory},
  series = {Graduate Texts in Mathematics},
  number = {205},
  year = {2012},
  publisher = {Springer},
  doi = {10.1007/978-1-4613-0105-9}
}

@online{Hau13,
  author = {Haugseng, Rune},
  title = {The Becker-Gottlieb transfer is functorial},
  year = {2013},
  eprinttype = {arxiv},
  eprint = {1310.6321},
  note = {Retracted}
}

@article{Kad,
  author = {Kadeishvili, T.},
  title = {Structure of the $A(\infty)$-algebra and the Hochschild and Harrison cohomologies},
  volume = {91},
  journal = {Proceedings of Tbilisi A. Razmadze Mathematical Institute},
  year = {1988},
  pages = {19--27},
  language = {Russian with an English summary}
}

@inproceedings {Kel,
  AUTHOR = {Keller, Bernhard},
  TITLE = {A-infinity algebras, modules and functor categories},
  BOOKTITLE = {Trends in representation theory of algebras and related topics},
  EDITOR = {de la Peña, José A. and Bautista, Raymundo},
  SERIES = {Contemporary Mathematics},
  NUMBER = {406},
  PAGES = {67--93},
  PUBLISHER = {American Mathematical Society},
  YEAR = {2006},
  DOI = {10.1090/conm/406/07654},
}

@article{Kel21,
  author = {Keller, Bernhard},
  title = {Hochschild (Co)homology and Derived Categories},
  volume = {47},
  DOI = {10.1007/s41980-021-00556-0},
  number = {1 supplement},
  journal = {Bulletin of the Iranian Mathematical Society},
  year = {2021},
  pages = {57--83}
}

@article{KM,
  author = {Klein, John R. and Malkiewich, Cary},
  title = {Corrigendum to “The transfer is functorial”},
  volume = {410},
  DOI = {10.1016/j.aim.2022.108752},
  journal = {Advances in Mathematics},
  year = {2022},
  eid = {108752}
}

@unpublished{LMN,
  author = {Lehner, Georg and Malkiewich, Cary and Naef, Florian},
  title = {The Becker--Gottlieb transfer is not functorial: An example based on units of group rings},
  year = {2026},
  note = {To appear concurrently}
}

@article{LM,
  author = {Lind, John A. and Malkiewich, Cary},
  title = {The transfer map of free loop spaces},
  volume = {371},
  DOI = {10.1090/tran/7497},
  number = {4},
  journal = {Transactions of the American Mathematical Society},
  year = {2018},
  pages = {2503--2552}
}

@article{LS24,
  author = {Lindell, Erik and Saleh, Bashar},
  title = {Representation stability for homotopy automorphisms},
  volume = {24},
  DOI = {10.2140/agt.2024.24.2673},
  number = {5},
  journal = {Algebraic \& Geometric Topology},
  year = {2024},
  pages = {2673--2705}
}

@article{LS07,
  author = {Lupton, Gregory and Smith, Samuel Bruce},
  title = {Rationalized evaluation subgroups of a map II: Quillen models and adjoint maps},
  volume = {209},
  DOI = {10.1016/j.jpaa.2006.05.019},
  number = {1},
  journal = {Journal of Pure and Applied Algebra},
  year = {2007},
  pages = {173--188}
}

@article{Pri,
  author = {Prigge, Nils},
  title = {Tautological rings of fibrations},
  volume = {26},
  DOI = {10.2140/agt.2026.26.1565},
  number = {4},
  journal = {Algebraic \& Geometric Topology},
  year = {2026},
  pages = {1565--1584}
}

@article{Smi,
  author = {Smirnov, V. A.},
  title = {Homology of fibre spaces},
  volume = {35},
  DOI = {10.1070/rm1980v035n03abeh001856},
  number = {3},
  journal = {Russian Mathematical Surveys},
  year = {1980},
  pages = {294--298}
}

@article{Sta,
  author = {Stasheff, James Dillon},
  title = {Homotopy Associativity of H-Spaces. II},
  volume = {108},
  number = {2},
  journal = {Transactions of the American Mathematical Society},
  year = {1963},
  pages = {293--312},
  DOI = {10.2307/1993609}
}

@book{Tan,
  author = {Daniel Tanré},
  title = {Homotopie rationnelle},
  subtitle = {Modèles de Chen, Quillen, Sullivan},
  year = {1983},
  publisher = {Springer},
  series = {Lecture Notes in Mathematics},
  number = {1025},
  doi = {10.1007/bfb0071482}
}
